\renewcommand{\div}{\operatorname{div}}
\newcommand{\Rr}{{\mathbb{R}}}
\newcommand{\Nn}{{\mathbb{N}}}
\newcommand{\Zz}{{\mathbb{Z}}}
\newcommand{\Tt}{{\mathbb{T}}}
\def\dx{{\rm d}x}
\def\leq{\leqslant}
\def\geq{\geqslant}
\numberwithin{equation}{section}
\newtheoremstyle{thmlemcorr}{10pt}{10pt}{\itshape}{}{\bfseries}{.}{10pt}{{\thmname{#1}\thmnumber{
#2}\thmnote{ (#3)}}}
\newtheoremstyle{thmlemcorr*}{10pt}{10pt}{\itshape}{}{\bfseries}{.}\newline{{\thmname{#1}\thmnumber{
\newtheoremstyle{defi}{10pt}{10pt}{\itshape}{}{\bfseries}{.}{10pt}{{\thmname{#1}\thmnumber{
#2}\thmnote{ (#3)}}}
\newtheoremstyle{remexample}{10pt}{10pt}{}{}{\bfseries}{.}{10pt}{{\thmname{#1}\thmnumber{
#2}\thmnote{ (#3)}}}
\newtheoremstyle{ass}{10pt}{10pt}{}{}{\bfseries}{.}{10pt}{{\thmname{#1}\thmnumber{
A#2}\thmnote{ (#3)}}}
\theoremstyle{thmlemcorr}
\newtheorem{theorem}{Theorem}
\numberwithin{theorem}{section}
\newtheorem{lemma}[theorem]{Lemma}
\newtheorem{corollary}[theorem]{Corollary}
\newtheorem{proposition}[theorem]{Proposition}
\theoremstyle{thmlemcorr*}
\newtheorem{theorem*}{Theorem}
\newtheorem{lemma*}[theorem]{Lemma}
\newtheorem{corollary*}[theorem]{Corollary}
\newtheorem{proposition*}[theorem]{Proposition}
\newtheorem{problem*}[theorem]{Problem}
\newtheorem{conjecture*}[theorem]{Conjecture}
\theoremstyle{defi}
\newtheorem{hyp}{Assumption}
\newtheorem{problem}{Problem}
\theoremstyle{remexample}
\newtheorem{remark}[theorem]{Remark}
\theoremstyle{ass}
\begin{document}

\title{Existence of positive solutions for an approximation of stationary
mean-field games}

\author{Nojood Almayouf}
\affil{Effat University, Kingdom of Saudi Arabia}

\author{Elena Bachini}
\affil{University of Padova, Italy}

\author{Andreia Chapouto}
\affil{University of Coimbra, Portugal}

\author{Rita Ferreira}
\affil{KAUST, Kingdom of Saudi Arabia}

\author{Diogo Gomes}
\affil{KAUST, Kingdom of Saudi Arabia}

\author{Daniela Jord\~ao}
\affil{University of Coimbra, Portugal}

\author{David Evangelista Junior}
\affil{KAUST, Kingdom of Saudi Arabia}

\author{Avetik Karagulyan}
\affil{Yerevan State University, Armenia}
  
\author{Juan Monasterio}
\affil{University of Buenos Aires, Argentina}

\author{Levon Nurbekyan}
\affil{KAUST, Kingdom of Saudi Arabia}

\author{Giorgia Pagliar}
\affil{University of Verona, Italy}

\author{Marco Piccirilli}
\affil{University of Rome Tor Vergata, Italy}

\author{Sagar Pratapsi}
\affil{University of Coimbra, Portugal}

\author{Mariana Prazeres}
\affil{KAUST, Kingdom of Saudi Arabia}

\author{Jo{\~{a}}o Reis}
\affil{University of Lisbon, Portugal}

\author{Andr\'e Rodrigues}
\affil{University of Coimbra, Portugal}

\author{Orlando Romero}
\affil{University of Lisbon, Portugal}

\author{Maria Sargsyan}
\affil{American University of Armenia, Armenia}

\author{Tommaso Seneci}
\affil{University of Verona, Italy}

\author{Chuliang Song}
\affil{Zhejiang University, China}

\author{Kengo Terai}
\affil{Waseda University, Japan}

\author{Ryota Tomisaki}
\affil{Waseda University, Japan}

\author{Hector Velasco-Perez}
\affil{National Autonomous University
of Mexico, Mexico}
  
\author{Vardan Voskanyan}
\affil{KAUST, Kingdom of Saudi Arabia} 
  
\author{Xianjin Yang}
\affil{KAUST, Kingdom of Saudi Arabia}

\date{\today}

\bibliographystyle{plain}

\maketitle 

\begin{abstract}
Here, we consider a regularized mean-field game model that features a
low-order regularization. We prove the existence of solutions with
positive density. To do so, we combine a priori estimates with the
continuation method. In contrast with high-order
regularizations, the low-order regularizations are easier to
implement numerically. Moreover, our methods give a theoretical foundation for
this approach.

\end{abstract}

%



\thanks{
Rita  Ferreira, Diogo Gomes, David Evangelista Junior,
Levon Nurbekyan, Mariana Prazeres,  Vardan Voskanyan, and Xianjin Yang
were partially supported by KAUST baseline and start-up funds and 
KAUST SRI, Uncertainty Quantification Center in Computational Science and Engineering.  The other authors were partially supported by KAUST Visiting Students Research Program.
} 

\section{Prologue}

On August 22, 2015, eighteen young mathematicians
(B.Sc.\! and M.Sc.\! Students) arrived at  King Abdullah
University of Science and Technology (KAUST) in
Thuwal, Kingdom of Saudi Arabia. They were  participants
in
the first KAUST summer camp in Applied Partial Differential
Equations. Among them were Argentinians, Armenians,
Chinese, Italians, Japanese, Mexicans, Portuguese,
and Saudis. For many of them, this was their first
time abroad. All were looking forward to the following
three weeks.

We designed the summer camp to give an intense
hands-on three-week Ph.D. experience. It comprised
courses, seminars, a project, and a final presentation.
The project was an essential component of the
summer camp, and its main outcome is the present
paper. Our objectives were to introduce
students to an active research topic, teach effective
paper writing techniques, and develop their presentation
skills. Numerous challenges
lay ahead. First, we had three weeks to achieve
these goals. Second, students had distinct backgrounds.
Third, we planned to study a
research-level problem, not a simple exercise.

We selected a problem in mean-field games, a recent
and active area of research.
The primary goal was to prove the existence of
solutions of a system of partial differential
equations.
To avoid unnecessary technicalities, we considered
the one-dimensional case, where the partial differential
equations become ordinary differential equations.
The project involved partial differential equation
methods that are usually taught in advanced courses:
a priori estimate methods, the infinite dimensional
implicit function theorem, and the continuation
method.  In spite
of the elementary nature of the proofs, the results
presented here are a relevant and original contribution
to the theory of mean-field games.

We divided the students into five groups and assigned
tasks to each of them. Roughly, each of the sections
of this paper corresponds to a task.
The students were given a rough statement of
the results to be proven, and their task was to figure out
the appropriate assumptions, the precise statements, and the proofs. 
The work of the different groups had to be coordinated to make sure that
the  assumptions, results, and proofs fit nicely with each other
and that duplicate work was avoided.  
Several KAUST\  graduate students and post-docs
were of invaluable help in this  regard.

This project would not have been possible within such a short time frame without the use of new technologies. 
The paper was written in a
collaborative fashion using the platform \url{http://authorea.com}
that allowed all the groups to work simultaneously. In this way, all  groups had access to the latest version of the assumptions and to the current statements of the theorems and propositions. Each
group  could easily comment and make corrections on other group's work.

This project illustrates how research in mathematics can be a collaborative experience even with a large number of participants. Moreover, it gave  each of the students in the summer camp a glimpse of real research in mathematics.
Finally, this was the first experience
for the Ph.D. students and post-docs who helped in this project in mentoring and advising students. This summer camp was a unique and valuable experience
for all participants whose results we share in this paper.

\section{Introduction}

Mean-field game (MFG) theory is the study of strategic decision making in 
large populations of small interacting individuals
who are also called agents or players.   
The MFG framework was developed in the engineering community by Caines, Huang, and Malham\'e \cite{Caines2,
Caines1} and in the mathematical community by Lasry and Lions \cite{ll1,ll2,ll3} (also see \cite{LCDF}). These games model the behavior of rational agents who play symmetric differential games.
In these problems, each player chooses their optimal strategy in view of  global (or macroscopic) statistical information on the ensemble of players.
This approach leads to novel problems in nonlinear equations.
Current research topics are the applications of
MFGs (including, for example, growth theory in economics and environmental policy), mathematical problems related to MFGs
(existence, uniqueness, and regularity questions),
 and numerical methods in the MFGs framework (discretization, convergence, and efficient implementation).

Here, we consider the following problem:
\begin{problem}
        \label{P1}
Let $\Tt=\Rr/ \Zz$ denote the one-dimensional torus, identified with the interval $[0,1]$ whenever convenient. 
Fix a $C^2$ Hamiltonian, $H:\Rr\to \Rr$, and a continuous potential, $V:\Tt\to \Rr$. Let $\alpha$ and $\epsilon$ 
be positive numbers with \(\epsilon \leq 1\)
for definedness. 
Find $u,m\in C^2(\Tt)$ satisfying $m>0$ and
\begin{equation}
\label{PP1}
\begin{cases}
u-u_{xx}+H(u_x)+V(x)=m^\alpha+\epsilon (m-m_{xx})
\\
m-m_{xx}-(H'(u_x)m)_x=1-\epsilon (u-u_{xx}).
\end{cases}
\end{equation}
\end{problem}
In this problem,  
$m$ is the distribution of players and $u(x)$ is the value function for a typical player in the state $x$. We stress that the condition $m>0$ is an essential component of the problem. So, if $(u,m)$ solves the Problem \ref{P1}, we require $m$ to be strictly positive.
We will show the existence of solutions to this problem under 
suitable assumptions on the Hamiltonian that are described in Section \ref{assp}. An example that satisfies those assumptions is $H(p)= (1+p^2)^{\gamma/2}$
with $1<\gamma<2$, and any $V:\Tt\to \Rr$ of class $C^2$. 
  
When $\epsilon=0$, \eqref{PP1} becomes
\begin{equation}
\label{PP2}
\begin{cases}
u-u_{xx}+H(u_x)+V(x)=m^\alpha
\\m-m_{xx}-(H'(u_x)m)_x=1.
\end{cases}
\end{equation}
The  system in \eqref{PP2}  is
a typical MFG model similar to the one introduced in \cite{ll1}.
The Legendre transform of the Hamiltonian, $H$,
given by $L(v)=\sup_{p} -p v-H(p)$ is the cost in units of time that an 
agent incurs by choosing to move with a drift $v$;
the potential, $V$, accounts for spatial preferences of the agents; the term $m^\alpha$ encodes congestion effects.

The MFG models proposed in \cite{ll1,ll2} consist of a system of partial differential equations that have \eqref{PP2} as a particular case.
The current literature covers a broad range of problems, including stationary problems  \cite{GPatVrt,
GPM1, GR,GM,  PV15}, heterogeneous populations
\cite{MR3333058}, 
time-dependent models \cite{cgbt, GPim1, GPim2,  GPM3, GPM2,porretta, porretta2}, congestion problems \cite{GMit, Graber2}, and 
 obstacle-type problems \cite{GPat}. For a recent account of the theory of MFG, we suggest the survey paper \cite{GS} and the courses
 \cite{LCDF} and \cite{LIMA}. 

The system in \eqref{PP1} arises as an approximation of \eqref{PP2} 
that preserves monotonicity properties. Monotonicity-preserving approximations 
to MFG systems were introduced in \cite{FG2}. In that paper, 
the authors consider  mean-field games in dimension $d\geq 1$ that include
the following example:
\begin{equation}
\label{FGS}
\begin{cases}
u-\Delta u+H(Du,x)+V(x)=m^\alpha+\epsilon (m+\Delta ^{2q} m)+\beta_\epsilon(m)
\\
m-\Delta m-\div (D_pH(Du,x)m)=1-\epsilon (u+\Delta ^{2q}u),
\end{cases}
\end{equation}
where $q$ is a large enough integer, and $\beta_\epsilon$ is a suitable penalization that satisfies $\beta_\epsilon(m)\to -\infty$ as $m\to 0$. 
Then, as $\epsilon\to 0$, the solutions of \eqref{FGS} converge to solutions of \eqref{PP2}. 
Yet, from the perspective of numerical methods, both the high-order degree of \eqref{FGS} and the singularity caused
by the penalty, $\beta_\epsilon$, are unsatisfactory due to a poor conditioning of discretizations. Here, we investigate a low-order regularization
that may be more suitable for computational problems. 

A fundamental difficulty in the analysis of \eqref{PP1} is the non-negativity of $m$. 
The Fokker-Planck equation in \eqref{PP2} has a maximum principle, and, consequently, $m\geq 0$ for any solution of \eqref{PP2}. 
Due to the coupling, this property is not evident in the corresponding equation in \eqref{PP1}. 
The previous regularization in \eqref{FGS} relies on a penalty that forces the positivity of $m$. 
This mechanism does not exist in \eqref{PP1}, and 
we are not aware of any general method to prove the existence of positive solutions
of \eqref{PP1}. 
  
Our main result is the following theorem:
\begin{theorem}
\label{T1}
Suppose Assumptions~\ref{ass1}--\ref{ass6} hold
(cf. Section~\ref{assp}) . Then, there exists $\epsilon_0>0$ such that for all \( 0< \epsilon < \epsilon_0 \),
Problem \ref{P1} admits a $C^{2, \frac 1 2}$ solution $(u,m)$.
\end{theorem}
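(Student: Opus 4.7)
The plan is to prove Theorem~\ref{T1} by the continuation method announced in the introduction, combined with a priori estimates that preserve the positivity of $m$. I would fix $\epsilon > 0$ (later taken small) and introduce a one-parameter family of problems $(P_\lambda)_{\lambda \in [0,1]}$ that interpolates between a trivial problem at $\lambda = 0$ and Problem~\ref{P1} at $\lambda = 1$. A natural homotopy is obtained by multiplying the nonlinearities $H(u_x)+V(x)$, $m^\alpha$, and $(H'(u_x)m)_x$ by $\lambda$, and deforming the constant right-hand side $1$ so that at $\lambda = 0$ the system admits the obvious positive solution $(u,m) = (\text{const},1)$, with strongly elliptic linearization thanks to the extra $\epsilon(m-m_{xx})$ and $-\epsilon(u-u_{xx})$ terms.

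Next, I would study
\[
\Lambda = \{ \lambda \in [0,1] : (P_\lambda) \text{ admits a } C^{2,1/2} \text{ solution with } m > 0 \},
\]
and show $\Lambda$ is nonempty, open, and closed in $[0,1]$. Non-emptiness follows from the base case $\lambda = 0$. For openness, I would linearize at a given solution $(u_0,m_0)$ and apply the implicit function theorem between suitable H\"older spaces on $\mathbb{T}$. The crucial point is the invertibility of the linearized operator: testing the system by $(\delta u, -\delta m)$ and exploiting the monotone structure of Lasry--Lions, together with the coercive contribution of the $\epsilon$-regularization $\epsilon(\delta m - \delta m_{xx})$ and $-\epsilon(\delta u - \delta u_{xx})$, should give an $H^1$ energy estimate strong enough to conclude that the linearization is an isomorphism; elliptic regularity then places the solution in $C^{2,1/2}$.

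For closedness, I would derive a priori bounds on $(u_\lambda,m_\lambda)$ uniform in $\lambda$. Using monotonicity and the regularization, an $L^2$ energy estimate gives $H^1$ bounds; Sobolev embedding then yields $C^{0,1/2}$ control; reinserting these in the two equations and invoking Schauder theory on $\mathbb{T}$ delivers $C^{2,1/2}$ bounds. Taking a sequence $\lambda_n \in \Lambda$ with $\lambda_n \to \lambda_\ast$, Arzel\`a--Ascoli produces a $C^{2,1/2}$ limit that solves $(P_{\lambda_\ast})$. Standard openness-plus-closedness then yields $1 \in \Lambda$.

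The main obstacle I foresee is preserving the strict positivity $m_\lambda > 0$ along the continuation. The coupling term $-\epsilon(u-u_{xx})$ destroys the maximum principle that guarantees $m \geq 0$ in the classical MFG system~\eqref{PP2}, so one cannot simply invoke the Fokker--Planck structure. My plan is to combine a uniform lower bound on $\int_{\mathbb{T}} m_\lambda$, obtained by integrating the second equation of~\eqref{PP1} and using the uniform $H^1$ bound on $u_\lambda$, with a Harnack-type inequality for the second equation viewed as a linear elliptic equation in $m_\lambda$ with $C^{0,1/2}$ coefficients depending on $u_\lambda$. This should produce a positive lower bound $m_\lambda \geq c(\epsilon) > 0$ that persists in the $C^{2,1/2}$ limit. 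The smallness requirement $\epsilon < \epsilon_0$ enters precisely here, as an absorption condition in the energy and Harnack estimates that keeps the perturbation from dominating the monotone backbone of the system.
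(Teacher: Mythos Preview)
Your overall architecture---continuation in a parameter $\lambda$, implicit function theorem for openness via a Lax--Milgram/monotonicity argument on the linearization, a priori estimates plus Arzel\`a--Ascoli for closedness---matches the paper's exactly. Two differences are worth noting. First, the paper's homotopy is simpler than yours: it only replaces $V$ by $\lambda V$ and leaves $H$, $m^\alpha$, and the transport term untouched; the base case $\lambda=0$ is then handled by finding an explicit constant solution (Proposition~\ref{prop_1}). Your more aggressive deformation is not wrong, but it forces you to also deform the constant $1$ on the right-hand side to make the base solution work, and you must re-check that the a priori estimates are uniform in $\lambda$ when the nonlinear terms themselves are being scaled.

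The substantive gap is in your treatment of the lower bound on $m$. A Harnack inequality for the second equation, viewed as a linear equation in $m$, will only produce a positive lower bound if the source term $1-\epsilon(u-u_{xx})$ is itself non-negative; otherwise $m$ is not a supersolution of the homogeneous operator and Harnack gives nothing in the right direction. So the real work is to prove $\|\epsilon(u-u_{xx})\|_\infty<1$, and this requires an $L^\infty$ bound on $u_{xx}$, not merely the $H^1$ or $H^2$ control your energy estimates give. The paper obtains this (Lemma~\ref{lowboum_lemma}) by solving the coupled system algebraically for $u_{xx}$, as in \eqref{ABC123}, and then showing each term on the right is $o(\epsilon^{-1})$; this is precisely where the subquadratic hypothesis $\gamma<2$ of Assumption~\ref{ass5} is used, since the a priori bound $\|u_x\|_\infty\leq C\epsilon^{-1/2}$ gives $\epsilon|H(u_x)|\leq C\epsilon^{1-\gamma/2}\to 0$ only when $\gamma<2$. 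Your proposal does not identify this step, and your phrase ``absorption condition'' does not capture it. Once $1-\epsilon(u-u_{xx})\geq \tfrac12$ is secured, the paper does not use Harnack at all: it multiplies the Fokker--Planck equation by $1/m$, integrates to bound $\int 1/m$ and $\int(\ln m)_x^2$, and applies Morrey's inequality to $\ln m$ (Proposition~\ref{lboundsprop}). Your Harnack route would likely also succeed at that point, but the decisive estimate is the $L^\infty$ control of $\epsilon(u-u_{xx})$, and that is missing from your plan.
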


Theorem~\ref{T1} introduces 
a low-order regularization procedure for \eqref{PP2} for which existence of solutions can be established without penalty terms. 
Because high-order regularization methods and penalty terms create serious difficulties in the numerical implementation, 
this result is relevant to the numerical approximation of \eqref{PP2}.
Moreover, we believe that the techniques 
we consider here can be extended to higher-dimensional problems. 

To prove the main result, we use the continuation method. The first step is to establish a priori estimates for the solutions of \eqref{PP1}. Then, we replace the potential, $V$, by $\lambda V$ for $0\leq \lambda\leq 1$. For $\lambda=0$, which corresponds to $V=0$ in \eqref{PP1}, we determine an explicit solution. The a priori estimates give that the set, $\Lambda$, of values, $\lambda$, for which \eqref{PP1} has a solution is a closed set. Finally, we apply an
infinite-dimensional version of the implicit function theorem to show that $\Lambda$ is relatively open in $[0,1]$. This
proves the existence of solutions.

The remainder of this paper is structured as follows.
We discuss the main assumptions in 
Section \ref{assp}. Next, 
in Section \ref{partcase},
we start our study of \eqref{PP1}
by considering the case $V=0$ and constructing an explicit solution.
 Sections \ref{enes}--\ref{lowboum} are devoted to a priori estimates 
 for solutions of \eqref{PP1}. These estimates include energy and second-order bounds, discussed respectively 
 in Sections  \ref{enes} and \ref{second}, H\"older and $C^{2,\frac 1 2}$ estimates, addressed respectively in 
 Sections \ref{hold} and \ref{hr}, and lower bounds on $m$,  given in Section \ref{lowboum}.
Next, we lay out the main results needed for the implicit function theorem. 
We introduce the linearized operator in Section \ref{linop} and
discuss
its injectivity and surjectivity properties. 
Finally, the proof of 
Theorem \ref{T1}
is presented in Section \ref{pthm}.



\section{Main Assumptions}
\label{assp}

To prove Theorem \ref{T1}, we need to introduce various assumptions that are natural in this class of problems. These
encode distinct properties of the Hamiltonian in a convenient way.
We begin by stating a polynomial growth condition for the Hamiltonian.

\begin{hyp}
\label{ass1}
There exist 
positive
 constants, $C_1, C_2, C_3$, and $\gamma>1$, such that for all \(p\in\Rr\),
the Hamiltonian, $H$, satisfies
\[
-C_1+C_2|p|^\gamma\leq H(p)\leq C_1+C_3|p|^\gamma.
\]      
\end{hyp}

For convex Hamiltonians, the expression $pH'(p)-H(p)$ is the Lagrangian written in momentum coordinates. 
The next assumption imposes polynomial growth in this quantity. 

\begin{hyp}
\label{ass2}
There exist positive
constants, $\tilde C_1$, $\tilde C_2$, and  $\tilde C_3$, such that for all \(p\in\Rr\),
we have
\[
-\tilde C_1+\tilde C_2|p|^\gamma\leq pH'(p)-H(p)\leq \tilde C_1+\tilde C_3|p|^\gamma.
\]      
\end{hyp}

Because we look for solutions $(u,m)\in C^{2,\frac 1 2}(\Tt)\times  C^{2,\frac 1 2}(\Tt)$
of Problem \ref{P1},
we require in  Assumption~\ref{ass2.5} and  Assumption~
\ref{ass3.5}
more regularity for $V$ and $H$. 
\begin{hyp}
        \label{ass2.5}
The potential, $V$, is of class $C^2$. 
\end{hyp}

Because the Hamilton-Jacobi equation in \eqref{PP2} arises from an optimal control problem, it is natural to suppose that the
Hamiltonian, $H$, is convex. 

\begin{hyp}\label{H_convex}
$H$ is convex.  
\end{hyp}

\begin{hyp}
        \label{ass3.5}
        The Hamiltonian, $H$, is of class $C^4$. 
\end{hyp}

Here, we work with subquadratic Hamiltonians.
Accordingly, we impose  the following condition
on \(\gamma\).

\begin{hyp}
        \label{ass5}
$\gamma<2$. 
\end{hyp}

Finally, we state a growth condition on the derivative of the Hamiltonian.
The exponent, $\gamma$, is the same as in Assumptions \ref{ass1} and \ref{ass2}.
This is a natural growth condition that the model 
$H(p)=(1+|p|^2)^{\frac \gamma 2}$ satisfies. 
 \begin{hyp}
   \label{ass6}
 There exists a positive  
 constant, $ \bar C$,  such that for all \(p\in\Rr\),
we have
 \[
  |H'(p)|\leq \bar C(1+
|p|^{\gamma-1}).
 \]     
  \end{hyp}

\section{The  $V=0$ case}
\label{partcase}

To prove Theorem \ref{T1}, we use the continuation method. More precisely, we consider  system \eqref{PP1} with $V$ replaced by $\lambda V$ for $0\leq \lambda\leq 1$. 
Next, we  show the existence of the solution for all $0\leq \lambda\leq 1$.
As a starting point, we study the  $\lambda=0$
case; that is, $V=0$. We  show that \eqref{PP1} admits a solution in this particular instance.

  \begin{proposition}\label{prop_1} 
  Suppose that $V=0$. Then, there exists an $\epsilon_0>0$ such that for all $0 < \epsilon < \epsilon_0$,   Problem \ref{P1} admits a  solution $(u,m)$.
   \end{proposition}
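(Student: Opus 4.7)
The plan is to look for \emph{spatially constant} solutions. If $u \equiv \bar u$ and $m \equiv \bar m$, then all derivatives in \eqref{PP1} vanish, and (with $V=0$) the system collapses to the two algebraic equations
\begin{align*}
\bar u + H(0) &= \bar m^\alpha + \epsilon\,\bar m,\\
\bar m &= 1 - \epsilon\,\bar u.
\end{align*}
Since any such constants automatically lie in $C^{2}(\Tt)$, it suffices to solve this algebraic system with $\bar m>0$.

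I would eliminate $\bar u = (1-\bar m)/\epsilon$ from the first equation and arrive at a single scalar equation
\[
F(\bar m,\epsilon) := 1 - (1+\epsilon^2)\,\bar m + \epsilon\bigl(H(0) - \bar m^{\alpha}\bigr) = 0.
\]
The natural starting point is $\epsilon=0$, where $F(\bar m,0)=1-\bar m$, so $\bar m=1$ is a root. Moreover,
\[
\frac{\partial F}{\partial \bar m}(1,0) = -1 \neq 0,
\]
so the classical implicit function theorem produces an $\epsilon_0>0$ and a $C^{1}$ branch $\epsilon \mapsto \bar m(\epsilon)$, defined for $|\epsilon|<\epsilon_0$, with $\bar m(0)=1$ and $F(\bar m(\epsilon),\epsilon)=0$. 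Shrinking $\epsilon_0$ if necessary, continuity gives $\bar m(\epsilon) > 1/2 > 0$ for $0<\epsilon<\epsilon_0$.

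Alternatively, and just as easily, one can argue by the intermediate value theorem: for each fixed $\epsilon>0$ small, $F(0,\epsilon) = 1 + \epsilon H(0) > 0$, while $F(\bar m,\epsilon) \to -\infty$ as $\bar m \to +\infty$ (the linear term dominates since $\alpha>0$ and $1+\epsilon^2 > 0$), so some root $\bar m(\epsilon) \in (0,\infty)$ exists. Either way, setting $\bar u(\epsilon) := (1-\bar m(\epsilon))/\epsilon$ yields a constant pair $(\bar u,\bar m) \in C^{2}(\Tt) \times C^{2}(\Tt)$ with $\bar m>0$ that solves Problem~\ref{P1} for $V=0$.

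There is essentially no hard step here: the whole proposition rests on noticing that the constant ansatz is admissible when $V=0$, reducing the PDE system to two scalar equations with a transparent structure near $\epsilon=0$. The only minor caveat to be careful about is ensuring $\bar m(\epsilon)>0$ (so that $\bar m^\alpha$ makes sense and the positivity requirement of Problem~\ref{P1} is met), which is handled either by continuity from $\bar m(0)=1$ or by applying the IVT on the positive half-line, in both cases at the cost of taking $\epsilon$ sufficiently small.
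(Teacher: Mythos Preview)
Your proposal is correct and follows essentially the same route as the paper: the constant ansatz reduces the system to the same pair of algebraic equations, and after eliminating $\bar u$ you obtain (up to a sign) the same scalar equation the paper calls $g(m)=0$. The paper opts for the intermediate value theorem argument (your ``alternatively'' branch), checking $g(0)<0$ for small $\epsilon$ and exhibiting a finite point $1+\epsilon C$ with $C>|H(0)|$ where $g>0$, whereas you also offer the implicit function theorem around $(\bar m,\epsilon)=(1,0)$; both are fine. One tiny quibble: in your IVT sketch, the phrase ``the linear term dominates'' is not quite accurate when $\alpha>1$, but the conclusion $F(\bar m,\epsilon)\to -\infty$ holds regardless since both the linear and the $\bar m^\alpha$ contributions are negative.
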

  \begin{proof}
    We look for constant solutions $(u,m)$. In this case, we have $u_x = u_{xx} = m_x = m_{xx} = 0$. Accordingly, \eqref{PP1} reduces to
    \[
      \begin{cases}
        u+H(0)=m^\alpha+\epsilon m\\
        m=1-\epsilon u.
      \end{cases}
    \]
    
    In the previous system, solving the first equation for $u$ and replacing the resulting expression into the second, we get
    \begin{equation}
      \label{functiong}
      \epsilon m^\alpha + (1+\epsilon^2)m -1 - \epsilon H(0) = 0.
    \end{equation}
    We set $g(m)=\epsilon m^\alpha + (1+\epsilon^2)m -1 - \epsilon H(0)$, so that \eqref{functiong} reads $g(m)=0$.
    Next, we notice that $g(0) = -1 - \epsilon H(0)$. For small enough $\epsilon_0>0$  and for all $0<\epsilon<\epsilon_0$, we have $g(0) < 0$.
    On the other hand, if we take a constant $C > \lvert H(0) \rvert$, we have
    \[
    g(1+\epsilon C) 
    > 1+\epsilon C - 1 - \epsilon H(0) =\epsilon(C-H(0)) > 0.
    \]
    
    Because $0 < 1+\epsilon C$, by the intermediate value theorem, there exists  a constant $m_0\in ]0,1+\epsilon C[$ such that $g(m_0)=0$. Then, setting $u_0=(1-m_0)/\epsilon$, we conclude that the pair $(u_0,m_0)$ satisfies the requirements. 
  \end{proof}
  
  \begin{remark}
    Note that if $H(0) > 0$, then $g(0) < 0$ and $g(1+\epsilon C)>0$. In this case, the previous proposition holds for all $\epsilon>0$.
  \end{remark}

\section{Energy estimates}
\label{enes}

MFG systems such as \eqref{PP2} admit many a priori estimates. Among those, energy estimates stand out for their elementary proof -- the multiplier method.  
Here, we apply this method to \eqref{PP1}.
\begin{proposition} \label{enint}
Suppose that Assumptions     
\ref{ass1} and \ref{ass2} hold. 
Let $(u,m)$ solve Problem~\ref{P1}. 
Then,  
\begin{equation}
\label{foest}
\int_0^1 m^{\alpha + 1} \,dx+ \int_0^1 |u_x|^{\gamma}(1+m) \,dx+ \epsilon \int_0^1 \left(u^2+m^2+u_x^2+m_x^2 \right) dx \leq C,
\end{equation}
where $C$ is a universal positive constant
depending only on the constants in Assumptions~\ref{ass1} and \ref{ass2}  and on $\|V\|_{L^\infty}$.
\end{proposition}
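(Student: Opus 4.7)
The plan is to apply the classical Lasry--Lions multiplier trick, adapted to the low-order regularization in~\eqref{PP1}. Concretely, I would multiply the Hamilton--Jacobi equation by the test function $(m-1)$ and the Fokker--Planck equation by $u$, integrate over $\Tt$, and subtract the two resulting identities. The key structural feature of mean-field games is that this particular combination produces, after integration by parts, the monotonicity-friendly quantity $(u_x H'(u_x)-H(u_x))m$ together with the congestion term $m^{\alpha+1}$, while all awkward cross-terms cancel.

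\noindent\textbf{Main computation.} Integration by parts is legitimate since $\Tt$ has no boundary. I expect the following cancellations: the second-order cross products $\int u_x m_x\,dx$ coming from $-\int u_{xx}(m-1)\,dx$ and $-\int m_{xx}u\,dx$ annihilate each other; the transport term gives $\int (H'(u_x)m)_x u\,dx = -\int H'(u_x)u_x m\,dx$, which combines with $\int H(u_x)(m-1)\,dx$ to produce $-\int(u_x H'(u_x)-H(u_x))m\,dx - \int H(u_x)\,dx$; and the two contributions $\pm\int u\,dx$ coming from the zeroth order and source terms cancel. The regularization terms yield positive quadratics directly: $\epsilon\int (m-m_{xx})(m-1)\,dx = \epsilon\int (m^2+m_x^2)\,dx - \epsilon\int m\,dx$ and $\epsilon\int (u-u_{xx})u\,dx = \epsilon\int (u^2+u_x^2)\,dx$. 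What survives should be, schematically,
\begin{align*}
&\int_0^1 \bigl(u_x H'(u_x)-H(u_x)\bigr) m\,dx + \int_0^1 m^{\alpha+1}\,dx + \epsilon\int_0^1\bigl(u^2+u_x^2+m^2+m_x^2\bigr)dx \\
&\qquad = -\int_0^1 H(u_x)\,dx + \int_0^1 V(m-1)\,dx + \int_0^1 m^\alpha\,dx + \epsilon\int_0^1 m\,dx.
\end{align*}

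\noindent\textbf{Closing the estimate.} Assumption~\ref{ass2} and the positivity $m>0$ give $(u_x H'(u_x)-H(u_x))m \geq \tilde C_2|u_x|^\gamma m - \tilde C_1 m$, contributing $\tilde C_2\int|u_x|^\gamma m\,dx$ to the left-hand side. Assumption~\ref{ass1} applied to the right gives $-\int H(u_x)\,dx \leq C_1 - C_2\int|u_x|^\gamma\,dx$, so that $C_2\int|u_x|^\gamma\,dx$ is transferred to the left. Combined, these produce the desired term $\int|u_x|^\gamma(1+m)\,dx$. The remaining unsigned lower-order contributions $\int m\,dx$, $\int m^\alpha\,dx$, and $\|V\|_{L^\infty}\int m\,dx$ can be controlled by $\delta\int m^{\alpha+1}\,dx + C_\delta$ via Young's inequality with exponents $\alpha+1$ and $(\alpha+1)/\alpha$; choosing $\delta$ small enough allows absorption into the $\int m^{\alpha+1}\,dx$ already sitting on the left. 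This yields~\eqref{foest}, with a constant depending only on $\|V\|_{L^\infty}$ and the structural constants in Assumptions~\ref{ass1}--\ref{ass2}.

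\noindent\textbf{Expected obstacle.} The one subtle bookkeeping point is ensuring that no smallness of $\epsilon$ is required. The source of concern is the term $\epsilon\int m\,dx$ on the right, which one might fear interacts badly with the $\epsilon\|m\|_{L^2}^2$ term on the left. However, since $\Tt$ has unit measure, Cauchy--Schwarz gives $\epsilon\int m\,dx \leq \tfrac{\epsilon}{2}\|m\|_{L^2}^2 + \tfrac{\epsilon}{2}$, absorbing half of the available positive term; alternatively, Young's inequality bounds $\epsilon\int m\,dx$ by $\int m^{\alpha+1}\,dx$ plus a constant directly. Either route delivers an estimate uniform in $\epsilon\in(0,1]$, as required by the hypothesis $\epsilon\leq 1$.
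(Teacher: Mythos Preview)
Your proposal is correct and follows essentially the same route as the paper: the Lasry--Lions multiplier trick, then Assumptions~\ref{ass1}--\ref{ass2} to extract $|u_x|^\gamma(1+m)$, and Young's inequality to absorb the lower-order terms $\int m$, $\int m^\alpha$ into $\int m^{\alpha+1}$. The only cosmetic difference is the choice of multiplier on the Hamilton--Jacobi equation: the paper uses $(1+\epsilon-m)$ and adds, whereas you use $(m-1)$ and subtract. The paper's choice produces an extra $\epsilon\int H(u_x)\,dx$ on the left (harmless, by Assumption~\ref{ass1}) but also a stray $-\epsilon\int u\,dx$ on the right that must be absorbed via $2|u|\leq u^2+1$; your version avoids this term entirely since the two $\int u$ contributions cancel exactly, which is marginally cleaner.
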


\begin{proof}
We begin by multiplying the first equation in
\eqref{PP1} by
$(1+\epsilon - m)$ and the second one by $u$.
Adding the resulting expressions and integrating, we get
\begin{multline}
\label{equation1}
\int_0^1 \left[ (1+\epsilon)H(u_x) + m(u_x H'(u_x)
- H(u_x)) \right] dx + \int_0^1 m^{\alpha + 1}
\, dx + \epsilon
\int_0^1 (u^2 + m^2 + u^2_x + m^2_x)\, dx \\
= -\epsilon \int_0^1 u\, dx + \int_0^1 (m-1-\epsilon)V(x)
\, dx+ (1 + \epsilon) \int_0^1 m^\alpha\, dx + \epsilon(1+\epsilon)\int_0^1
m\, dx,
\end{multline}
where we also used integration by parts and the periodicity of $u$ and $m$   to obtain
\[
\int_0^1 mu_{xx} \, dx - \int_0^1 um_{xx} \,dx = 0,  
\]
\[
\int_0^1 u_{xx}\, dx = \left. u_x \right|_0^1
= 0, \quad \int_0^1 m_{xx}\, dx = \left. m_x \right|_0^1
= 0,
\]
\[
 \int_0^1 m m_{xx} \, dx
=  - \int_0^1 m^2_x \, dx, \quad \int_0^1 u u_{xx}
\, dx = - \int_0^1 u^2_x \, dx,
\]
and 
\[
\int_0^1 u(H'(u_x)m)_x\, dx = - \int_0^1 u_x  H'(u_x)m \, dx.
\]
Next, we observe that by Assumptions \ref{ass1} and \ref{ass2}, and using the fact that $0<\epsilon\leq 1$, we have 
\begin{equation}
\label{eeqq2}
\begin{aligned}
& \int_0^1 \left[(1+\epsilon)H(u_x) + m(H'(u_x)
u_x - H(u_x)) \right] \,dx  \\ 
&\quad\geq  \int_0^1 \left[ -2C_1 - \tilde{C_1}
m + K_0|u_x|^\gamma \left(1+m
\right) \right] dx,
\end{aligned}
\end{equation}
where \(K_0 := \min\{C_2, \tilde C_2 \}\).

From \eqref{equation1} and \eqref{eeqq2}, it follows
that
\begin{equation}
\label{eeqq33}
\begin{aligned}
&\int_0^1  K_0 |u_x|^\gamma(1+m) \, dx + \int_0^1  m^{\alpha + 1}
\, dx + \epsilon \int_0^1 \left(u^2 + m^2
+ u_x^2 + m_x^2 \right) dx \\
&\quad\leq  \frac \epsilon 2 \int_0^1 u^2\, dx
+ \frac 1 2 +\left( \Vert V\Vert_\infty+2 + \tilde
C_1 \right) \int_0^1 m \,
dx + 2 \int_0^1 m^\alpha dx + 2\left( \Vert V\Vert_\infty
+ C_1 \right),
\end{aligned}
\end{equation}
where we also used the estimates \(2 u \leq u^2 + 1\)
and  $0<\epsilon\leq 1$.

Finally, we observe that for every $\delta_1, \delta_2>
0$, there exist constants,
$K_1$ and $K_2$, such that 
\begin{equation}
\label{eeqq3}
\int_0^1 m^\alpha \,dx \leq 
\delta_1 \int_0^1 m^{\alpha + 1} \,dx +  K_1, \quad
\int_0^1 m\, dx \leq \delta_2 \int_0^1 m^{\alpha+1}\,dx
+ K_2.
\end{equation}
Consequently, taking \(\delta_1 = \frac 1 8\)
and \(\delta_2 = \frac 1 {4(\Vert V\Vert_\infty+2 + \tilde C_1)}\) in \eqref{eeqq3} and using the
resulting estimates in \eqref{eeqq33}, we conclude
that \eqref{foest} holds.
\end{proof}

\begin{corollary}
\label{c_3} 
Suppose that Assumptions~\ref{ass1} and \ref{ass2}
hold. 
Let $(u,m)$ solve Problem~\ref{P1}. Then, 
\[
\int_0^1 m \,dx \leq C,
\]
where $C$ is a universal positive constant
depending only on the constants in Assumptions~\ref{ass1} and \ref{ass2}  and on $\|V\|_{L^\infty}$.
\end{corollary}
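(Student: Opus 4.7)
The plan is to deduce this estimate immediately from Proposition~\ref{enint} via a one-line application of Young's inequality, exactly mirroring the auxiliary inequality \eqref{eeqq3} already used in the proof of that proposition.

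First, I would invoke Proposition~\ref{enint} to obtain the bound
\[
\int_0^1 m^{\alpha+1}\, dx \leq C,
\]
where $C$ depends only on the constants in Assumptions~\ref{ass1} and \ref{ass2} and on $\|V\|_{L^\infty}$. Note that since $(u,m)$ solves Problem~\ref{P1} we have $m>0$, so writing $m = m\cdot 1$ and applying Young's inequality with exponents $\alpha+1$ and $(\alpha+1)/\alpha$ yields
\[
m \leq \frac{1}{\alpha+1}\, m^{\alpha+1} + \frac{\alpha}{\alpha+1}
\]
pointwise. Integrating this inequality over $[0,1]$ and using the bound on $\int_0^1 m^{\alpha+1}\,dx$ gives
\[
\int_0^1 m\, dx \leq \frac{1}{\alpha+1}\int_0^1 m^{\alpha+1}\, dx + \frac{\alpha}{\alpha+1} \leq \frac{C}{\alpha+1}+\frac{\alpha}{\alpha+1},
\]
which is the desired estimate, with a universal constant depending only on the quantities allowed in the statement.

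There is no real obstacle here: the corollary is genuinely a direct consequence of the $L^{\alpha+1}$ bound, and the only thing to check is that the dependence of the resulting constant is admissible, which it clearly is since $\alpha$ is a fixed parameter of Problem~\ref{P1} and $C$ already carries the allowed dependencies. Alternatively, one could use Hölder's inequality in the form $\int_0^1 m\, dx \leq \bigl(\int_0^1 m^{\alpha+1}\, dx\bigr)^{1/(\alpha+1)}$, which gives the same conclusion just as quickly.
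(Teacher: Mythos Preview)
Your proof is correct and essentially identical to the paper's own proof: both invoke Proposition~\ref{enint} to bound $\int_0^1 m^{\alpha+1}\,dx$ and then apply Young's inequality with exponents $\alpha+1$ and $(\alpha+1)/\alpha$ to arrive at exactly the same final chain of inequalities.
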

\begin{proof}
Due to  \eqref{foest} and because $m$ is positive,
\[
\int_0^1 m^{\alpha + 1} \leq C,
\]
where $C$ is a universal positive constant
depending only on the constants in Assumptions~\ref{ass1}
and \ref{ass2}  and on $\|V\|_{L^\infty}$.
Consequently, using Young's inequality, we have
that
\[
\int_0^1 m\, dx \leq  \frac{1}{\alpha+1}\int_0^1 m^{\alpha + 1} \,dx + \frac{\alpha}{\alpha+1}
\leq   \frac{C}{\alpha+1}+
 \frac{\alpha}{\alpha+1}. \qedhere
 \]
\end{proof}

\section{Second-order estimates}
\label{second}

We proceed in our study of \eqref{PP1} by examining another technique to 
obtain a priori estimates. These estimates give additional control 
over high-order norms of the solutions.

 \begin{proposition}
 \label{soe}
Suppose that Assumption~\ref{ass2.5} holds. Let $(u,m)$ solve Problem~\ref{P1}.
Then, we have
\begin{equation}
\label{P61i}
\int_0^1 \left(H''(u_x)u_{xx}^2m+\alpha m^{\alpha-1}m_{x}^2 \right) dx+ \epsilon\int_0^1 \left(m_x^2 +m_{xx}^2+u_x^2 +u_{xx}^2\right) dx\leq C,
\end{equation}
where $C>0$ denotes a universal constant
depending only on  $\|V\|_{C^2}$.
Moreover, under Assumption~\ref{H_convex},
\begin{equation}
\label{P61ii}
\int_0^1 \alpha m^{\alpha-1}m_{x}^2 \, dx+ \epsilon\int_0^1 \left(m_x^2 +m_{xx}^2+u_x^2 +u_{xx}^2\right) dx\leq C.
\end{equation}
\end{proposition}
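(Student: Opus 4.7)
The plan is to multiply the two equations in \eqref{PP1} by carefully chosen second-order multipliers and use integration by parts on $\Tt$ to produce an exact identity in which all indefinite cross terms cancel. Specifically, I would multiply the Hamilton--Jacobi equation by $-m_{xx}$ and the Fokker--Planck equation by $-u_{xx}$, integrate, and combine the two resulting identities.

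Multiplying the first equation in \eqref{PP1} by $-m_{xx}$ and integrating by parts using periodicity, one obtains
\[
\int_0^1 u_x m_x\, dx + \int_0^1 u_{xx} m_{xx}\, dx + \int_0^1 H'(u_x) u_{xx} m_x\, dx - \int_0^1 V''(x)\, m\, dx = \alpha \int_0^1 m^{\alpha-1} m_x^2\, dx + \epsilon \int_0^1 m_x^2\, dx + \epsilon \int_0^1 m_{xx}^2\, dx,
\]
where one uses that $-\int m^\alpha m_{xx}\,dx = \alpha \int m^{\alpha-1} m_x^2\,dx$ and $-\epsilon \int m\, m_{xx}\,dx = \epsilon \int m_x^2\,dx$. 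The key step comes from the Fokker--Planck side: multiplying the second equation by $-u_{xx}$ and integrating, the transport term expands as
\[
\int_0^1 (H'(u_x) m)_x u_{xx}\, dx = \int_0^1 H''(u_x) u_{xx}^2\, m\, dx + \int_0^1 H'(u_x) u_{xx} m_x\, dx,
\]
which is precisely what produces the curvature term $H''(u_x)u_{xx}^2 m$. Together with $\int_0^1 u u_{xx}\,dx = -\int_0^1 u_x^2\,dx$ and $\int_0^1 u_{xx}\,dx = 0$, this yields
\[
\int_0^1 u_x m_x\,dx + \int_0^1 u_{xx} m_{xx}\,dx + \int_0^1 H''(u_x) u_{xx}^2 m\, dx + \int_0^1 H'(u_x) u_{xx} m_x\,dx = -\epsilon \int_0^1 u_x^2\,dx - \epsilon \int_0^1 u_{xx}^2\,dx.
\]

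Adding the two identities, the three cross contributions $\int u_x m_x$, $\int u_{xx} m_{xx}$, and $\int H'(u_x)u_{xx} m_x$ appear with opposite signs and cancel exactly, leaving the clean identity
\[
\int_0^1 H''(u_x) u_{xx}^2\, m\, dx + \alpha \int_0^1 m^{\alpha-1} m_x^2\, dx + \epsilon \int_0^1 \bigl( m_x^2 + m_{xx}^2 + u_x^2 + u_{xx}^2 \bigr) dx = -\int_0^1 V''(x)\, m\, dx.
\]
By Assumption~\ref{ass2.5}, $\|V''\|_\infty$ is finite, and by Corollary~\ref{c_3}, $\int_0^1 m\, dx \leq C$; hence the right-hand side is bounded in absolute value, which proves \eqref{P61i}. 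Under the convexity Assumption~\ref{H_convex}, $H''\geq 0$, so the curvature term on the left is nonnegative and may be dropped, giving \eqref{P61ii}. The main obstacle is purely computational bookkeeping: checking that the chosen multipliers cause the three cross terms to cancel and that the expansion of $(H'(u_x)m)_x u_{xx}$ produces the $H''(u_x)u_{xx}^2 m$ term with the correct sign; there is no genuine analytical difficulty once the right multipliers are identified.
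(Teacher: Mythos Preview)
Your approach is exactly the paper's: multiply by the second-order multipliers $\pm m_{xx}$ and $\pm u_{xx}$, integrate by parts, and combine so that the three cross terms cancel. Your intermediate identities and your final identity
\[
\int_0^1 H''(u_x) u_{xx}^2\, m\, dx + \alpha \int_0^1 m^{\alpha-1} m_x^2\, dx + \epsilon \int_0^1 \bigl( m_x^2 + m_{xx}^2 + u_x^2 + u_{xx}^2 \bigr)\, dx = -\int_0^1 V''(x)\, m\, dx
\]
are all correct, and the conclusion via Corollary~\ref{c_3} and Assumption~\ref{H_convex} is the same as in the paper.

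There is, however, one bookkeeping slip in your combination step. In both of your displayed identities the three cross terms $\int u_x m_x$, $\int u_{xx} m_{xx}$, and $\int H'(u_x) u_{xx} m_x$ appear on the left-hand side with the \emph{same} (positive) sign, so \emph{adding} the two identities doubles them rather than cancelling them. What you need is to \emph{subtract} the first identity from the second (equivalently, the paper multiplies the first equation by $+m_{xx}$, the second by $+u_{xx}$, and subtracts). Once you make that correction, the cancellation is exact and your final identity follows. This is precisely the ``computational bookkeeping'' hazard you flagged at the end; the analytical content of your argument is complete and matches the paper's proof.
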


\begin{proof} 
To simplify the notation, we  represent by
\(C\) any positive constant that depends only
on \(\Vert V\Vert_{C^2}\) and  whose value
may change from one instance to another.

Multiplying the first equation in \eqref{PP1} by $m_{xx}$ and the second one by $u_{xx}$ yields
\begin{align*}
&\big(u-u_{xx}+H(u_x)+V(x)\big)m_{xx}=\big(m^\alpha+\epsilon (m-m_{xx})\big)m_{xx},
\\\notag
&\big(m-m_{xx}-(H'(u_x)m)_x\big) u_{xx}=\big(1-\epsilon (u-u_{xx}\big)u_{xx}.
\end{align*}
Subtracting the above equations  integrated over $[0,1]$ gives
\begin{equation} 
\label{g3_1}
\begin{aligned}
&\int_0^1\big( u m_{xx} - m u_{xx}+ u_{xx}\big)\,dx + \int_0^1\Big(H(u_x)m_{xx} + \big( H'(u_x) m\big)_x  u_{xx}\Big)\,dx 
\\
&\quad+\int_0^1 V(x)m_{xx}\,dx - \int_0^1 m^\alpha m_{xx}\,dx
+\epsilon \int_0^1 \big( -m m_{xx} + m_{xx}^2 - u u_{xx}+
u_{xx}^2 \big)  \,dx =0.
\end{aligned}
\end{equation}
Next, we evaluate each of the integrals above.  Using the integration by parts formula and the periodicity of boundary conditions, we have
\begin{equation}
\label{g3_2}
\int_0^1 \big( u m_{xx} - m u_{xx} +u_{xx} \big)\,
dx=0.
\end{equation}
In addition,
\begin{equation}
\label{g3_3}
\begin{aligned}
&\int_0^1\left[\big( H'(u_x) m\big)_x  u_{xx} +H(u_x) m_{xx} \right] dx\\
&\quad = \int_0^1  \left[H''(u_x) m  u^2_{xx}+ (H(u_x))_x m_x + \big( H(u_x) \big) m_{xx} \right]dx
 =\int_0^1 H''(u_x)mu^2_{xx}\, dx.
\end{aligned}
\end{equation}
Furthermore, we have
\begin{equation}\label{g3_4}
-\int_0^1 m^\alpha m_{xx}\, dx=\int_0^1 \alpha m^{\alpha-1} m^2_{x}\, dx
\end{equation}
and
\begin{equation}\label{g3_5}
\int_0^1 -V m_{xx} \, dx= -\int_0^1 V_{xx} m \, dx\leq 
\int_0^1 |V_{xx}| m \, dx\leq C \int_0^1 m \, dx\leq C,
\end{equation}
where we used Corollary~\ref{c_3}.

Finally,
\begin{equation}\label{g3_6}
\epsilon\int_0^1  \big( -m m_{xx}  + m_{xx}^2 - u u_{xx}+
u_{xx}^2 \big) \,dx=\epsilon \int_0^1 \left( m_x^2 +  m_{xx}^2+  u_x^2  +  u_{xx}^2 \right) \, \dx.
\end{equation}
Using \eqref{g3_1}--\eqref{g3_6}, we get
\begin{align*}
&\int_0^1 H''(u_x)mu^2_{xx} \, \dx + \int_0^1 \alpha m^{\alpha-1} m^2_{x} \, \dx\\
& \quad
+\epsilon \int_0^1 \left( m_x^2
+  m_{xx}^2+  u_x^2  +  u_{xx}^2 \right) \, \dx
=-\int_0^1 V m_{xx}\leq C. 
\end{align*}
This completes the proof of \eqref{P61i}. To conclude
the proof of Proposition~\ref{soe}, we observe
that Assumption~\ref{H_convex} implies that \(H''\)
is a non-negative function, which together with
\eqref{P61i} gives \eqref{P61ii}.
\end{proof}

\section{H\"older continuity}
\label{hold}

We recall that Morrey's theorem in one-dimension \cite{E6} gives the following result.
\begin{proposition} 
\label{morrey}
Let $f \in C^1(\mathbb{T})$. Then,
  \begin{equation*}
    \left \lvert f(x) - f(y) \right \rvert \leq \left \lVert f_x \right \rVert_{L^2}  \left \lvert x - y \right \rvert ^\frac{1}{2},\quad \forall x,y \in \mathbb{T}.
  \end{equation*}
\end{proposition}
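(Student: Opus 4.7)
The plan is to prove this classical one-dimensional Morrey-type embedding by a direct application of the fundamental theorem of calculus followed by the Cauchy--Schwarz inequality; no heavy machinery is needed.

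First, I would identify $\mathbb{T}$ with $[0,1]$ and, given two points in the torus, reduce to the case where both are represented by points in the unit interval. Without loss of generality, take representatives $x,y\in[0,1]$ with $y\leq x$. Since $f\in C^1(\mathbb{T})$, the fundamental theorem of calculus yields
\[
f(x)-f(y)=\int_y^x f_x(t)\,dt.
\]

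Next, I would apply the Cauchy--Schwarz inequality to estimate
\[
|f(x)-f(y)|\leq \int_y^x |f_x(t)|\,dt \leq |x-y|^{1/2}\left(\int_y^x |f_x(t)|^2\,dt\right)^{1/2}\leq |x-y|^{1/2}\,\|f_x\|_{L^2},
\]
where in the last step I extend the integration domain to the whole torus, using that $|f_x|^2\geq 0$.

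The only subtlety is that on $\mathbb{T}$ the distance between $x$ and $y$ can be realized by either of two arcs, so in principle one should apply the argument to the shorter arc; however, this only improves the bound, since the $L^2$ norm of $f_x$ over $\mathbb{T}$ dominates its integral over any sub-arc. Thus the stated inequality holds for all $x,y\in\mathbb{T}$. There is no real obstacle in this proof; the only thing to be mindful of is the correct handling of representatives on the torus so that $|x-y|$ is understood as the torus distance.
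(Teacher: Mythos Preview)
Your argument is correct and is the standard proof of this one-dimensional Morrey inequality: fundamental theorem of calculus followed by Cauchy--Schwarz, with the extension of the integration domain to the full torus in the last step. The remark about choosing the shorter arc on $\Tt$ is the right way to make sense of $|x-y|$ as the torus distance, and as you note it can only sharpen the estimate.

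The paper itself does not give a proof: it simply states the result as a recollection of Morrey's theorem in one dimension and cites a reference. So your write-up actually supplies more than what the paper contains here.
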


\begin{proposition}
\label{hbounds}
Suppose that Assumptions~\ref{ass1}--\ref{H_convex} hold. 
Let $(u,m)$ solve Problem~\ref{P1}. 
Then, $u$, $u_x$, $m$, and $m_x$
are
$\frac 1 2$-H\"older continuous functions 
with $L^\infty$-norms and H\"older constants bounded by $\frac C {\sqrt{\epsilon}}$, 
where $C$ is a universal constant 
depending only on the constants in Assumptions  
\ref{ass1} and \ref{ass2}  and on $\|V\|_{C^2}$. 
\end{proposition}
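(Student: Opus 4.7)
The plan is to derive the bounds by combining the second-order estimates from Proposition~\ref{soe} with Morrey's inequality (Proposition~\ref{morrey}) and the energy estimates from Proposition~\ref{enint}. Because Assumption~\ref{H_convex} is in force, I may use the cleaner bound \eqref{P61ii}, which yields
\[
\|u_x\|_{L^2}^2,\ \|u_{xx}\|_{L^2}^2,\ \|m_x\|_{L^2}^2,\ \|m_{xx}\|_{L^2}^2\ \leq\ \frac{C}{\epsilon},
\]
while Proposition~\ref{enint} also supplies $\|u\|_{L^2}^2,\|m\|_{L^2}^2\leq C/\epsilon$. All these constants depend only on the structural data listed in the statement.

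Next, I would apply Morrey's inequality directly to each of the four functions $u$, $u_x$, $m$, $m_x$ (each is $C^1$ on $\Tt$ since $(u,m)\in C^2(\Tt)^2$). This immediately gives
\[
|f(x)-f(y)|\ \leq\ \frac{C}{\sqrt{\epsilon}}\,|x-y|^{1/2}\qquad \text{for } f\in\{u,u_x,m,m_x\},
\]
so the $\tfrac12$-Hölder seminorms of all four functions are controlled by $C/\sqrt{\epsilon}$.

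To promote the Hölder bounds to $L^\infty$ bounds, I would use a standard base-point argument on the torus. For each $f\in\{u,u_x,m,m_x\}$, the $L^2$ bound $\|f\|_{L^2}\leq C/\sqrt{\epsilon}$ combined with the identity $\int_0^1 f^2\,dx\geq \min_{x\in\Tt} f(x)^2$ produces a point $x_0\in\Tt$ with $|f(x_0)|\leq C/\sqrt{\epsilon}$. The Hölder estimate just obtained then gives
\[
|f(x)|\ \leq\ |f(x_0)|+\frac{C}{\sqrt{\epsilon}}\,|x-x_0|^{1/2}\ \leq\ \frac{C}{\sqrt{\epsilon}}\qquad\text{for all }x\in\Tt,
\]
since $|x-x_0|\leq 1$ on the unit torus. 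This closes the argument for the four $L^\infty$ bounds and the four Hölder constants simultaneously.

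There is no real obstacle here beyond careful bookkeeping: the estimate is a direct consequence of previously proved results, and the only thing to watch is that every appearance of $\epsilon$ is absorbed into $C/\sqrt{\epsilon}$. Note that one can alternatively replace the $L^2$ base-point argument for $m$ by the sharper $L^1$ bound from Corollary~\ref{c_3}, which would produce a base point with $m(x_0)\leq C$ independent of $\epsilon$; however, since we only seek the bound $C/\sqrt{\epsilon}$, this refinement is not needed.
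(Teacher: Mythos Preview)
Your proposal is correct and follows essentially the same approach as the paper: combine the $L^2$ bounds from Propositions~\ref{enint} and~\ref{soe} with Morrey's inequality (Proposition~\ref{morrey}) to obtain the H\"older seminorms, and then use a base-point argument (the paper phrases it as the mean-value theorem for definite integrals) together with the H\"older bound to upgrade to $L^\infty$ control. The only cosmetic difference is that the paper cites Proposition~\ref{enint} for the $L^2$ bounds on $u_x,m_x$ and Proposition~\ref{soe} for those on $u_{xx},m_{xx}$, whereas you pull all four derivative bounds from \eqref{P61ii}; both are valid.
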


\begin{proof}
By Proposition \ref{enint},  we have that
\begin{equation} \label{solutions_L2_norms_bounded}
    \epsilon \int_0^1 \left(m^2 + u^2 + m_x^2 + u_x^2 \right)\, dx\leq C,
\end{equation}
where 
 $C$ is a universal constant 
depending only on the constants in Assumptions~\ref{ass1} and \ref{ass2}  and on $\|V\|_{L^\infty}$. 

According to Proposition \ref{morrey}, we have 
\begin{equation}
    \label{iMorrey}
    \left \lvert u(x) - u(y) \right \rvert \leq \left \lVert u_x \right \rVert_{L^2}  \left \lvert x - y \right \rvert ^\frac{1}{2},\quad \forall x,y \in \mathbb{T}.
\end{equation}
Moreover, combining the bound on $\|u\|_{L^2}$
given by \eqref{solutions_L2_norms_bounded}, 
the mean-value theorem for definite integrals, and the H\"older continuity
given by \eqref{iMorrey}, we get the $L^\infty$ bound on $u$. 
A similar inequality holds for $m$. 
Next, we observe that Proposition~\ref{soe} (see
\eqref{P61ii}) gives
bounds for $\|u_{xx}\|_{L^2}$ and $\|m_{xx}\|_{L^2}$
of the same type of \eqref{solutions_L2_norms_bounded}. 
Accordingly, the functions $u_x$ and $m_x$ are also $\frac 1 2 $-H\"older
continuous, and their $L^\infty$ norms are bounded by 
 $\frac C {\sqrt{\epsilon}}$, where $C$ depends only on the constants in 
 Assumptions~\ref{ass1} and \ref{ass2} and on $\|V\|_{C^2}$.
\end{proof}

\begin{remark}
\label{indepV}
Consider Problem~1 with \(V\) replaced by \(\lambda V\) for some
\(\lambda
\in [0,1]\).   By revisiting the proofs of Propositions~\ref{enint}
and \ref{soe}, we can  readily check that
 the bounds stated in
these propositions are uniform with respect to 
\(\lambda
\in [0,1]\). More precisely, \eqref{foest}, \eqref{P61i},
and \eqref{P61ii} are still valid for a universal positive constant, \(C\), that depends only on the constants in Assumptions~\ref{ass1} and \ref{ass2}  and on $\|V\|_{C^2}$.  
  In particular, Proposition~\ref{hbounds}
remains unchanged.

\end{remark}

\section{Higher Regularity}
\label{hr}

The bounds in the previous section give H\"older regularity for any solution $(u,m)$ of Problem~1 and for its derivatives $(u_x,m_x)$. Here, we use
\eqref{PP1} to improve this result and 
prove H\"older regularity for $u_{xx}$ and $m_{xx}$.

\begin{proposition} \label{hrofsol}
Suppose that Assumptions~\ref{ass1}--\ref{ass3.5} hold.
Let $(u,m)$ solve Problem~\ref{P1}.
Then $(u,m) \in C^{2,\frac 1 2} (\Tt) \times C^{2,\frac 1 2} (\Tt)$.  
\end{proposition}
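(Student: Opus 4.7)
The plan is to view \eqref{PP1} as a $2\times 2$ linear system in the unknowns $u_{xx}$ and $m_{xx}$, and then use Cramer's rule to isolate these second derivatives as explicit expressions in the lower-order quantities $u,u_x,m,m_x$, which are already known to be $C^{1/2}$ by Proposition \ref{hbounds}. Once such explicit formulas are obtained with a denominator bounded away from zero, H\"older regularity of $u_{xx}$ and $m_{xx}$ follows from elementary algebra on $C^{1/2}$ functions.

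More concretely, I would rewrite the first equation of \eqref{PP1} as
\begin{equation*}
u_{xx}-\epsilon\, m_{xx}=u+H(u_x)+V(x)-m^\alpha-\epsilon\, m\eqqcolon A(x),
\end{equation*}
and expand the divergence term in the second equation using $(H'(u_x)m)_x=H''(u_x)u_{xx}m+H'(u_x)m_x$, so that the second equation reads
\begin{equation*}
\bigl(\epsilon+H''(u_x)m\bigr)u_{xx}+m_{xx}=m-H'(u_x)m_x-1+\epsilon\, u\eqqcolon B(x).
\end{equation*}
The determinant of this linear system is $\Delta(x)=1+\epsilon^2+\epsilon H''(u_x)m$. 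By Assumption~\ref{H_convex}, $H''\geq 0$, and by Problem~\ref{P1}, $m>0$, so $\Delta(x)\geq 1$ pointwise. Solving gives
\begin{equation*}
u_{xx}=\frac{A+\epsilon B}{\Delta},\qquad m_{xx}=\frac{B-\bigl(\epsilon+H''(u_x)m\bigr)A}{\Delta}.
\end{equation*}

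Next, I would verify that the right-hand sides are $C^{1/2}$. By Proposition~\ref{hbounds}, $u,u_x,m,m_x\in C^{1/2}(\Tt)$. Assumption~\ref{ass2.5} gives $V\in C^{1/2}(\Tt)$, and Assumption~\ref{ass3.5} ensures that $H,H',H''$ are $C^1$, hence locally Lipschitz. Composing a locally Lipschitz function with a $C^{1/2}$ function of bounded range (the bounds are again from Proposition~\ref{hbounds}) yields a $C^{1/2}$ function, and products of $C^{1/2}$ functions are $C^{1/2}$; also $m\mapsto m^\alpha$ is locally Lipschitz on the range of $m$. Thus $A,B,H''(u_x)m\in C^{1/2}(\Tt)$, and $\Delta\in C^{1/2}(\Tt)$ with $\Delta\geq 1$. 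Since the reciprocal of a positive $C^{1/2}$ function bounded below is again $C^{1/2}$, we conclude $u_{xx},m_{xx}\in C^{1/2}(\Tt)$, which is the claim.

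The computation is essentially algebraic, so there is no major obstacle; the only subtle points are (i) noticing that the system can be decoupled pointwise at the level of second derivatives and (ii) using the convexity assumption on $H$ to guarantee invertibility of the coefficient matrix uniformly in $x$. Without convexity of $H$, the sign of $H''(u_x)m$ would be uncontrolled and the denominator $\Delta$ could potentially vanish, so Assumption~\ref{H_convex} is essential at this step.
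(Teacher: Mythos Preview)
Your proof is correct and follows essentially the same approach as the paper: both isolate $u_{xx}$ and $m_{xx}$ algebraically from \eqref{PP1}, use the convexity of $H$ together with $m>0$ to bound the denominator $1+\epsilon^2+\epsilon H''(u_x)m$ away from zero, and then read off $C^{1/2}$ regularity of the second derivatives from that of the lower-order data. The only cosmetic difference is that you solve the $2\times 2$ system simultaneously via Cramer's rule, whereas the paper substitutes one equation into the other to first obtain the formula \eqref{ABC123} for $u_{xx}$ alone and then recovers $m_{xx}$ from the second equation \eqref{7-2}; the resulting expression for $u_{xx}$ is identical (indeed $A+\epsilon B$ expands to the numerator in \eqref{ABC123}).
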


\begin{proof}
Solving for $m-m_{xx}$ in the second equation of \eqref{PP1}
and replacing the resulting expression in the first equation
yields
\begin{equation}
\label{7-1}
[1+\epsilon^2+\epsilon H''(u_x)m]u_{xx}=(1+\epsilon^2)u+H(u_x)-\epsilon +V(x)-m^{\alpha}-\epsilon H'(u_x)m_x.
\end{equation}
Because $H$ is convex, we have $H''(u_x)\geq 0$. Consequently,  $1+\epsilon^2+\epsilon H''(u_x)m\geq 1>0$. This allows us to rewrite \eqref{7-1} as
\begin{equation}
\label{ABC123}
u_{xx}=\frac{(1+\epsilon^2)u+H(u_x)-\epsilon +V(x)-m^{\alpha}-\epsilon H'(u_x)m_x}{1+\epsilon^2+\epsilon H''(u_x)m}.
\end{equation}
Because $u$, $m$, $u_x$, and $m_x$ are $\frac 1 2$-H\"{o}lder continuous and because \(H\)
and \(H'\) are locally Lipschitz functions, it
follows that  \[
(1+\epsilon^2)u+H(u_x)-\epsilon +V(x)-m^{\alpha}-\epsilon H'(u_x)m_x
\]
 is also $\frac
1 2$-H\"{o}lder continuous.
Similarly, due to Assumption \ref{ass3.5}, 
$1+\epsilon^2+\epsilon H''(u_x)m$ is also $\frac 1 2$-H\"{o}lder continuous and bounded from below. Therefore,  $u_{xx}$ is $\frac 1 2$-H\"{o}lder continuous; thus,  $u \in C^{2,\frac 1 2}(\Tt)$.

Finally, we observe that the second equation in \eqref{PP1} is equivalent
to
\begin{equation}
\label{7-2}
m_{xx}=m+\epsilon(u-u_{xx})-1-H''(u_x)mu_{xx}-H'(u_x)m_x.
\end{equation}
Hence, analogous arguments to those used above yield
that   $m_{xx}$ is also $\frac
1 2$-H\"{o}lder continuous. Thus,
 $m \in C^{2,\frac 1 2}(\Tt)$.
 \end{proof}


%
%
%
%

\section{Lower bounds on $m$}
  \label{lowboum}
  
Here, we establish our last a priori estimate,
which gives lower bounds on $m$.  
We begin by proving an auxiliary result.  

\begin{lemma}
\label{lowboum_lemma}
Suppose that Assumptions~\ref{ass1}--\ref{H_convex}, \ref{ass5}, and \ref{ass6} hold. 
Let $(u,m)$ solve Problem~\ref{P1}.
%
   Then,
   there exists $\bar \epsilon_0>0$ such that for all $0<\epsilon<\bar\epsilon_0$, we have 
   $\left\lVert \epsilon(u-u_{xx}) \right\rVert_{\infty} <\frac 1 2$.
\end{lemma}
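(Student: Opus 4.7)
The plan is to control $\|\epsilon u\|_\infty$ and $\|\epsilon u_{xx}\|_\infty$ separately, show that each vanishes as $\epsilon \to 0^+$, and conclude by the triangle inequality that a sufficiently small threshold $\bar\epsilon_0$ yields $\|\epsilon(u-u_{xx})\|_\infty < \frac{1}{2}$ for all $0 < \epsilon < \bar\epsilon_0$.

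The bound on $\|\epsilon u\|_\infty$ is immediate: Proposition~\ref{hbounds} gives $\|u\|_\infty \leq C/\sqrt{\epsilon}$, hence $\|\epsilon u\|_\infty \leq C\sqrt{\epsilon}\to 0$. For the second piece, the plan is to reuse the algebraic identity \eqref{ABC123} derived in the proof of Proposition~\ref{hrofsol},
\begin{equation*}
u_{xx} = \frac{(1+\epsilon^2)u + H(u_x) - \epsilon + V(x) - m^{\alpha} - \epsilon H'(u_x) m_x}{1 + \epsilon^2 + \epsilon H''(u_x) m}.
\end{equation*}
The convexity Assumption~\ref{H_convex} and the positivity of $m$ make the denominator bounded below by $1$, so it suffices to estimate $\epsilon$ times the numerator in $L^\infty$. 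Applying the pointwise bounds $\|u\|_\infty$, $\|u_x\|_\infty$, $\|m\|_\infty$, $\|m_x\|_\infty \leq C/\sqrt{\epsilon}$ from Proposition~\ref{hbounds}, together with the growth conditions on $H$ and $H'$ from Assumptions~\ref{ass1} and \ref{ass6}, each summand in $\epsilon\cdot(\text{numerator})$ produces a bound of the form $C\epsilon^{p}$ with $p>0$ (for instance, $\epsilon|H(u_x)| \leq C\epsilon + C\epsilon^{1-\gamma/2}$ and $\epsilon^2|H'(u_x)||m_x| \leq C\epsilon^{3/2}+C\epsilon^{2-\gamma/2}$); provided every exponent stays strictly positive, each contribution vanishes as $\epsilon\to 0^+$.

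The main delicacy lies in this bookkeeping of powers of $\epsilon$. The subquadratic hypothesis \ref{ass5} ($\gamma<2$) is precisely what keeps the Hamiltonian-growth contributions positive. The term $\epsilon m^\alpha$ deserves separate care: the crude bound $\epsilon m^\alpha \leq C\epsilon^{1-\alpha/2}$ is a positive power of $\epsilon$ whenever $\alpha<2$; for larger $\alpha$ a sharper $L^\infty$ estimate on $m$ can be obtained by interpolating the uniform $L^{\alpha+1}$-bound from Proposition~\ref{enint} against the $L^2$-bound on $m_x$ from Proposition~\ref{soe}, which still yields a strictly positive exponent. Once every contribution is a positive power of $\epsilon$, we conclude $\|\epsilon u_{xx}\|_\infty \to 0$, and combining with $\|\epsilon u\|_\infty \leq C\sqrt{\epsilon}$ allows us to choose $\bar\epsilon_0$ so that the target inequality $\|\epsilon(u-u_{xx})\|_\infty < \frac{1}{2}$ holds for all $0<\epsilon<\bar\epsilon_0$.
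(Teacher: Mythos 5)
Your proposal is correct and follows essentially the same route as the paper: split $\epsilon u$ and $\epsilon u_{xx}$, use the $C/\sqrt{\epsilon}$ bounds from Proposition~\ref{hbounds} together with identity \eqref{ABC123} and the lower bound $1+\epsilon^2+\epsilon H''(u_x)m\geq 1$, and check that each numerator term contributes a positive power of $\epsilon$, with Assumption~\ref{ass5} ($\gamma<2$) doing exactly the work you identify. The only divergence is the term $\epsilon m^\alpha$: where you distinguish cases on $\alpha$ and interpolate, the paper avoids any case split by observing that $\int_0^1 \alpha m^{\alpha-1}m_x^2\,dx=\tfrac{4\alpha}{(\alpha+1)^2}\int_0^1 \bigl((m^{(\alpha+1)/2})_x\bigr)^2dx\leq C$ uniformly in $\epsilon$, which combined with the uniform $L^{\alpha+1}$ bound and Morrey gives $\lVert m^\alpha\rVert_\infty\leq C$ independently of $\epsilon$; your interpolation also yields a strictly positive exponent, so both variants close the argument.
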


\begin{proof} 
We  show that
\begin{equation}
\label{limeu-euxx}
\lim_{\epsilon\to0} \left\lVert \epsilon(u-u_{xx}) \right\rVert_{\infty} = 0,
\end{equation}
from which Lemma~\ref{lowboum_lemma} easily follows.

 To simplify the notation, in the remainder of
this proof, \(C\) represents a positive constant that
is independent of \(\epsilon\) and whose value
may change from one instance to another.

By Proposition~\ref{hbounds}, we have that \(\Vert u\Vert_\infty
\leq C/\sqrt\epsilon\). Thus, 
\begin{equation}
\label{limeu}
\lim_{\epsilon\to0} \left\lVert \epsilon u
\right\rVert_{\infty} = 0.
\end{equation}

Next, we examine $\left\lVert \epsilon u_{xx}
\right\rVert_\infty$. The identity \eqref{ABC123} and the condition  $1+\epsilon^2+\epsilon H''(u_x)m > 1$
give
\begin{equation}
\label{esteuxx}
\left\lVert \epsilon u_{xx} \right\rVert_\infty
\leq  \left\lVert(1+\epsilon^2) u\right\rVert_\infty+\left\lVert
\epsilon H(u_x)\right\rVert_\infty+\epsilon^2+\left\lVert
\epsilon V\right\rVert_\infty+\left\lVert \epsilon
m^\alpha\right\rVert_\infty+\left\lVert\epsilon^2
H'(u_x)m_x\right\rVert_\infty.
\end{equation}
By \eqref{limeu} and by the boundedness of \(V\), it follows that
\(
\lim_{\epsilon \to 0} \big( \left\lVert\epsilon(1+\epsilon^2) u\right\rVert_\infty+\epsilon^2+\left\lVert
\epsilon V\right\rVert_\infty \big) =0.
\)

According to Propositions~\ref{enint} and \ref{soe},
we have that
\[
\int_0^1 m^{\alpha+1} \,d x \leq C \quad \text{and}\quad
\int_0^1\alpha m^{\alpha-1}m^2_x \,d x = \frac{4\alpha}{(\alpha+1)^2}
\int_0^1 \left(m^\frac{\alpha+1}{2}\right)_x^2\,d
x\leq C.
\] 
The first integral guarantees that there
exists  $x_0 \in \Tt$ such that $m^{\frac{\alpha +1}{2}}(x_0) \leq
C$. Then, because \(m>0\) and because \(m\in C^1(\Tt)\),
the second integral together with Proposition~\ref{morrey} implies that for all \(x\in \Tt\),
\begin{equation*}
0< m^\alpha(x) = \left( m^\frac{\alpha}{2}(x)\right)^2\leq
\left( m^\frac{\alpha+1}{2}(x) -  m^\frac{\alpha+1}{2}(x_0)
+\  m^\frac{\alpha+1}{2}(x_0) +\ 1\right)^2 \leq
C.
\end{equation*}
Hence, 
\(
\lim_{\epsilon\to0}\left\lVert \epsilon m^\alpha \right\rVert_\infty =0.
\)

Assumption~\ref{ass1} and Proposition \ref{hbounds}
give
 \[\left\lvert H(u_x)\right\rvert \leq C \big(1+
  \epsilon^{-\frac{\gamma}{2}}\big).\]
 This implies that $ \lim_{\epsilon\to0}\left \lVert \epsilon H(u_x)
\right\rVert_\infty = 0$ because $\gamma <2$
according to Assumption \ref{ass5}.

Combining Assumption~\ref{ass6} with Proposition
\ref{hbounds} gives the bound 
\[ \left\lvert H'(u_x) \right\rvert \leq C \big(1+
  \epsilon^{-\frac{\gamma-1}{2}}\big).
\]
By Proposition~\ref{hbounds}, we have that  $\left\lvert
m_x \right\rvert \leq C/\sqrt{\epsilon}$.
Therefore, invoking Assumption~\ref{ass5} once
more, \(
\lim_{\epsilon\to0}\big\Vert \epsilon^2 H'(u_x)m_x
\big\Vert_\infty = 0\).

Collecting all the limits proved above, we conclude
from \eqref{esteuxx} 
that \(\lim_{\epsilon\to0} \left\lVert \epsilon u_{xx}
\right\rVert_\infty =0\). This equality together with \eqref{limeu}
proves \eqref{limeu-euxx}.
 \end{proof}

\begin{proposition}
\label{lboundsprop}
Suppose that Assumptions~\ref{ass1}--\ref{H_convex}, \ref{ass5}, and \ref{ass6} hold. Assume that  $0<\epsilon<\bar\epsilon_0$, where $\bar\epsilon_0$
is determined by Lemma \ref{lowboum_lemma}.
Let $(u,m)$ solve Problem \ref{P1}.
Then, there exists a  constant $\bar m>0$ such that 
 $m>\bar m$ in \(\Tt\).
Moreover, $\bar m$  
is a universal constant 
depending only on  the constants in Assumptions~\ref{ass1}, \ref{ass2}, and \ref{ass6}  and on $\|V\|_{\infty}$.
\end{proposition}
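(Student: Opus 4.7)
The plan is to apply a minimum-principle argument to the Fokker--Planck equation in \eqref{PP1}, using Lemma~\ref{lowboum_lemma} to keep its right-hand side bounded away from zero, and then to pass information from the Hamilton--Jacobi equation through the coupling term at the point where $m$ achieves its minimum.

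\textbf{Key steps.} First, let $x_0\in\Tt$ be such that $m(x_0)=\min_{\Tt} m =: \bar m > 0$; since $m\in C^2(\Tt)$, we have $m_x(x_0)=0$ and $m_{xx}(x_0)\geq 0$. Expanding $(H'(u_x)m)_x = H''(u_x)u_{xx}\,m + H'(u_x)m_x$ and evaluating at $x_0$ yields $(H'(u_x)m)_x(x_0) = H''(u_x(x_0))u_{xx}(x_0)m(x_0)$. Substituting into the second equation of \eqref{PP1} at $x_0$ and invoking Lemma~\ref{lowboum_lemma} (for $\epsilon<\bar\epsilon_0$), I would obtain
\begin{equation*}
m(x_0)\bigl[1-H''(u_x(x_0))u_{xx}(x_0)\bigr] \;=\; 1-\epsilon(u-u_{xx})(x_0)+m_{xx}(x_0) \;\geq\; \tfrac{1}{2}.
\end{equation*}
Since $m(x_0)>0$, this forces $1-H''(u_x(x_0))u_{xx}(x_0)>0$, and proving the proposition reduces to bounding this bracket from above by a universal constant.

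Next, I would split on the sign of $u_{xx}(x_0)$. If $u_{xx}(x_0)\geq 0$, then Assumption~\ref{H_convex} gives $H''(u_x(x_0))u_{xx}(x_0)\geq 0$, so the bracket is at most $1$ and $m(x_0)\geq 1/2$ directly. If instead $u_{xx}(x_0)<0$, I would use the first equation of \eqref{PP1} at $x_0$ to substitute
\begin{equation*}
u_{xx}(x_0) \;=\; u(x_0)+H(u_x(x_0))+V(x_0)-m(x_0)^\alpha-\epsilon m(x_0)+\epsilon m_{xx}(x_0).
\end{equation*}
Multiplying by $H''(u_x(x_0))\geq 0$, plugging in, and discarding the nonnegative term $\epsilon H''(u_x(x_0))m(x_0)m_{xx}(x_0)$, I would aim to absorb the remaining terms using the uniform bound $m^\alpha\leq C$ established inside the proof of Lemma~\ref{lowboum_lemma}, the lower growth $H(p)\geq -C_1+C_2|p|^\gamma$ from Assumption~\ref{ass1}, and the sublinear derivative growth in Assumption~\ref{ass6}.

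\textbf{Main obstacle.} The delicate case is $u_{xx}(x_0)<0$: the available a priori bounds on $u$, $u_x$, and $u_{xx}$ from Proposition~\ref{hbounds} scale like $C/\sqrt{\epsilon}$ and are not uniform in $\epsilon$, so one cannot bound $H''(u_x(x_0))u_{xx}(x_0)$ pointwise by a brute-force argument. The hard part will be to show that when $u_{xx}(x_0)$ is very negative, the structure of the first equation forces $|u(x_0)|+C_2|u_x(x_0)|^\gamma$ to grow at a compensating rate, so that the potentially large factors combine in the right sign and cancel after multiplication by $H''(u_x(x_0))$. Assumption~\ref{ass5} ($\gamma<2$) together with $|H'(p)|\leq \bar C(1+|p|^{\gamma-1})$ from Assumption~\ref{ass6} will be critical for this cancellation, which is exactly what produces a lower bound $\bar m$ depending only on Assumptions~\ref{ass1}, \ref{ass2}, \ref{ass6}, and $\|V\|_\infty$.
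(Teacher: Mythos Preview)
Your approach is a pointwise minimum-principle argument, and it is genuinely different from the paper's proof. The paper never looks at the minimum of $m$; instead it multiplies the second equation of \eqref{PP1} by $1/m$, integrates, uses Lemma~\ref{lowboum_lemma} to absorb $\int \epsilon(u-u_{xx})/m$ into $\int 1/(2m)$, and after one integration by parts is left with $\int H'(u_x)\,m_x/m$, which it controls via Cauchy's inequality, Assumption~\ref{ass6}, and the energy estimate on $\int u_x^2$. This yields bounds on $\int 1/m$ and $\int (\ln m)_x^2$, and Morrey's inequality then gives the pointwise lower bound. The key feature is that only $H'$ enters quantitatively; $H''$ never appears except through convexity.

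Your proposal, by contrast, has a genuine gap in the case $u_{xx}(x_0)<0$ that you flag but do not close. The quantity you need to bound from above is $H''(u_x(x_0))\,|u_{xx}(x_0)|$, and the stated assumptions give \emph{no} quantitative information on $H''$ whatsoever: Assumptions~\ref{ass1}, \ref{ass2}, and \ref{ass6} constrain only $H$ and $H'$, while Assumption~\ref{H_convex} gives only $H''\geq 0$. Substituting the first equation for $u_{xx}(x_0)$ does not help, because after multiplication by $H''(u_x(x_0))$ you are left with terms like $H''(u_x(x_0))\,u(x_0)$ and $H''(u_x(x_0))\,H(u_x(x_0))$, and there is no mechanism in the hypotheses to control these. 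The ``compensating growth'' you hope for does not occur: if $u_{xx}(x_0)$ is very negative the first equation forces $u(x_0)+H(u_x(x_0))$ to be very negative as well, which says $u(x_0)$ is very negative (since $H\geq -C_1$) but yields no cancellation once multiplied by $H''$. Even for the model $H(p)=(1+p^2)^{\gamma/2}$, crude scaling gives $H''(u_x)|u_{xx}|\sim |u_x|^{2(\gamma-1)}$, and with $|u_x|\lesssim \epsilon^{-1/2}$ this is of order $\epsilon^{1-\gamma}$, which blows up as $\epsilon\to 0$ since $\gamma>1$; for general $H$ satisfying only the listed assumptions the situation is worse. The integral route in the paper sidesteps all of this precisely because the integration by parts converts the $(H'(u_x)m)_x$ term into one involving only $H'(u_x)$.
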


  \begin{proof}
   Multiplying the second equation in \eqref{PP1} by $1/m$ and integrating with respect to $x$ in $[0,1]$, we obtain
\begin{equation}
\label{mult1overm}
 \int_0^1 \left( 1-\frac{m_{xx}}{m}-\frac{(H'(u_x)m)_x}{m} \right)d x = \int_0^1 \left( \frac{1}{m}-\epsilon \frac{u-u_{xx}}{m} \right) d x.
\end{equation}
    Integration by parts and periodicity yields
    \[
      \int_0^1 \frac{m_{xx}}{m} \,d x = \int_0^1 \frac{m_x^2}{m^2} \,d x.
    \]
  Then,  \eqref{mult1overm} can be rewritten as
    \[
      \int_0^1 \left( \frac{1}{m} + \frac{m_{x}^2}{m^2} \right) \,d x = 1 + \int_0^1 \frac{\epsilon(u-u_{xx})}{m} \,d x - \int_0^1 \frac{(H'(u_x)m)_x}{m} \,d x.
     \]
     
     Next, we estimate the right-hand side of this identity.
     By 
    Lemma \ref{lowboum_lemma}, for $0<\epsilon<\bar\epsilon_0$,  we have $\left\lVert \epsilon(u-u_{xx})\right\rVert_\infty <1/2$. Consequently, 
\begin{equation}
\label{bonmaa}
 \int_0^1 \left( \frac{1}{2m} + \frac{m_{x}^2}{m^2} \right )\,d x \leq 1 + \left\lvert \int_0^1 \frac{(H'(u_x)m)_x}{m} \,d x \right\lvert =  1+\left\lvert \int_0^1 H'(u_x)\frac{m_x}{m}\,d x \right\rvert, 
\end{equation}
where in the last equality we use the integration by parts formula and the periodicity of $u_x$.
     In view of Cauchy's inequality, we conclude that
\begin{equation}
\label{bonmbb}
\left\lvert \int_0^1 H'(u_x)\frac{m_x}{m} \,d x \right\rvert \leq \int_0^1 \left\lvert H'(u_x)\frac{m_x}{m} \right\rvert \,d x \leq \int_0^1 \left( \frac{(H'(u_x))^2}{2}  + \frac{m_x^2}{2m^2}\right)d x.
\end{equation}
     Invoking Assumptions~\ref{ass5} and \ref{ass6},
we obtain the estimates 
\begin{equation*}
(H'(u_x))^2 \leq \bar C^2(1+ |u_x|^{\gamma-1})^2
\leq 2 \bar C^2 \big(1+ |u_x|^{2(\gamma-1)}\big)
\leq 2 \bar C^2 \big(2+ |u_x|^2\big)
\end{equation*}
in \(\Tt\). These estimates,
\eqref{bonmaa},  \eqref{bonmbb}, and Proposition~\ref{enint} yield
\[
       \int_0^1 \left( \frac{1}{2m} + \frac{m_{x}^2}{2m^2} \right) d x \leq 1+ \bar C^2 \left(2+ \frac{C}{\epsilon}\right) .
\]
Consequently,
for \(\tilde C = 2+ 2\bar C^2 \left(2+ \frac{C}{\epsilon}\right)\),
we obtain the two following bounds     \[
       \int_0^1 \frac{1}{m} \,d x \leq \tilde
C \quad \text{and}\quad \int_0^1\frac{m_x^2}{m^2}\,d x= \int_0^1 \big(\ln (m)\big)_x^2 \,d x \leq \tilde
C.
     \] 
The first bound implies that there exists $x_0 \in \Tt$ such that \(\frac{1}{m(x_0)} \leq \tilde C +1\); that is, \(\ln(m(x_0)) \geq - \ln(\tilde
C +1)\). The second bound, together with Proposition~\ref{morrey},
implies that for all \(x\in\Tt\), \(|\ln(m(x))
- \ln(m(x_0))| \leq \sqrt{\tilde C}\). Hence,
for all \(x\in\Tt\),
     \[
       m(x) \geq e^{-\sqrt{\tilde C} - \ln(\tilde
C +1)},
     \]
   which completes the proof.
 \end{proof}

 \begin{remark}
 \label{indepV2} 
 As in Remark~\ref{indepV}, the statement
 of Proposition~\ref{lboundsprop} remains unchanged if we
 replace \(V\) by \(\lambda V\) for some \(\lambda\in[0,1]\) in Problem~1. 
  \end{remark}

\section{The linearized operator}
  \label{linop}
  Consider the functional, $F$, defined for $(u,m,\lambda)\in
  C^{2, \frac 1 2}(\Tt) \times C^{2, \frac 1 2}(\Tt;
  ]0,\infty[) \times \left[0,1\right]$
  by
  \begin{equation}
  \label{F}
  F(u, m, \lambda) = 
  \begin{bmatrix}
  u - u_{xx} + H(u_x) + \lambda V - m^\alpha - \epsilon(m-m_{xx}) \\
  m - m_{xx} - \left(H'(u_x)m\right)_x - 1 + \epsilon(u - u_{xx})
  \end{bmatrix}.
  \end{equation}
 Note that under Assumption~\ref{ass3.5}, the functional $F$ is a $C^1$ map between
 $C^{2, \frac 1 2 }(\Tt) \times C^{2, \frac 1 2}(\Tt;
  ]0,\infty[)\times \left[0,1\right]$
 and $C^{0, \frac 1 2 } (\Tt)\times C^{2, \frac 1 2 }(\Tt)$.

To prove Theorem~\ref{T1}, we use the continuation method and show that for every \(\lambda \in [0,1]\),
the equation    
\begin{equation}
\label{F=0}
 F(u,m,\lambda)=0
\end{equation}
has a solution, $(u,m) \in C^{2, \frac 1 2 } (\Tt)\times C^{2, \frac 1 2}(\Tt;
  ]0,\infty[)$.
Theorem~\ref{T1} then follows by taking \(\lambda=1\)
and by observing that  system \eqref{PP1} is equivalent to 
$F(u,m,1)=0$. 

The implicit function theorem plays a crucial
role in proving the solvability of \eqref{F=0}. To use this theorem, for each \(\lambda\in[0,1]\),
we introduce  the linearized operator \(L\) of $F(\cdot,\cdot,\lambda)$ at $(u,m)\in
  C^{2, \frac 1 2}(\Tt) \times C^{2, \frac 1 2}(\Tt;
  ]0,\infty[)$; that is,

 \begin{equation}\label{lin_op}
  \begin{aligned}
  L(f,v) &= 
   \frac{\partial F}{\partial \mu} (u + \mu v, m + \mu f, \lambda) \Big|_{\mu=0} \\
&= 
  \begin{bmatrix}
  v - v_{xx} + H'(u_x) v_x - \alpha m^{\alpha - 1} f - \epsilon(f - f_{xx}) \\
  f - f_{xx} - \left(H''(u_x)v_xm + H'(u_x)f \right)_x + \epsilon(v - v_{xx})
  \end{bmatrix}
  \end{aligned}
  \end{equation}
for \((f,v) \in C^{2, \frac 1 2 } (\Tt) \times
C^{2, \frac 1 2 } (\Tt)\).
  Under Assumption~\ref{ass3.5} and because $(u,m)\in
  C^{2, \frac 1 2}(\Tt) \times C^{2, \frac 1 2}(\Tt;
  ]0,\infty[)$, \(L\) defines a map from \(C^{2, \frac 1 2 } (\Tt) \times
C^{2, \frac 1 2 } (\Tt)\) into  $ C^{0, \frac 1 2 }(\Tt)\times  C^{0, \frac 1 2 }(\Tt)$. Moreover,
 this map is  continuous and linear. 
   Next, we show that it is also an isomorphism between \(C^{2,
\frac 1 2 } (\Tt) \times
C^{2, \frac 1 2 } (\Tt)\) and  $ C^{0, \frac
1 2 }(\Tt)\times  C^{0, \frac 1 2 }(\Tt)$.
 
%

\begin{proposition}
\label{Lisisom}
Suppose that Assumptions~\ref{H_convex} and \ref{ass3.5} hold. 
Fix $\lambda\in[0,1]$
and assume that  $(u,m)\in
  C^{2, \frac 1 2}(\Tt) \times C^{2, \frac 1 2}(\Tt;
  ]0,\infty[)$ satisfies 
$F(u,m,\lambda)=0$. 
Then, the operator, $L$, given by \eqref{lin_op} is an isomorphism between
$C^{2,\frac 1 2 }(\Tt)\times C^{2,\frac 1 2 }(\Tt)$ and $C^{0,\frac 1 2 }(\Tt)\times C^{0,\frac 1 2 } (\Tt)$. 
\end{proposition}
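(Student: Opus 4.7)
\medskip

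\noindent\textbf{Plan of proof.}  Since continuity and linearity of $L$ are already noted in the paragraph preceding the statement, it suffices to establish injectivity and surjectivity; the open mapping theorem then upgrades bijectivity to the claimed isomorphism.  My plan is to prove injectivity by the standard MFG multiplier computation and to obtain surjectivity from Fredholm theory for elliptic systems on the compact manifold $\Tt$.

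\medskip

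\noindent\textbf{Injectivity.}  The first step is to suppose $L(f,v)=0$ and to integrate the scalar combination $\int_0^1 \bigl[f\,L_1(f,v) - v\,L_2(f,v)\bigr]\,\d x$, where $L_1,L_2$ denote the two components of $L$.  Repeated integration by parts, using the periodicity of $f,v,u,m$ on $\Tt$, cancels the cross-terms $\int fv$, $\int f_xv_x$, and crucially $\int f H'(u_x) v_x$ against its counterpart arising from $-\int v\bigl(H'(u_x)f\bigr)_x$.  What survives should be exactly
\[
\alpha \int_0^1 m^{\alpha-1} f^2 \, \d x + \int_0^1 H''(u_x)\,m\,v_x^2 \, \d x + \epsilon \int_0^1 \bigl(f^2 + f_x^2 + v^2 + v_x^2\bigr)\d x = 0.
\]
Each integrand is non-negative: $m^{\alpha-1}>0$ and $m>0$ because $m\in C^{2,\frac 12}(\Tt;\,]0,\infty[)$, $H''(u_x)\geq 0$ by Assumption \ref{H_convex}, and $\epsilon>0$.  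The third term alone forces $\|f\|_{L^2}=\|v\|_{L^2}=0$, hence $f\equiv v\equiv 0$.

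\medskip

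\noindent\textbf{Surjectivity.}  The plan is to argue that $L$ is Fredholm of index $0$ from $C^{2,\frac12}(\Tt)\times C^{2,\frac12}(\Tt)$ into $C^{0,\frac12}(\Tt)\times C^{0,\frac12}(\Tt)$; combined with the trivial kernel, this delivers surjectivity.  Reading off the coefficients of second derivatives in \eqref{lin_op} (after expanding $(H''(u_x)v_xm + H'(u_x)f)_x$), the principal symbol matrix is
\[
\begin{pmatrix} \epsilon & -1 \\ -1 & -\epsilon - H''(u_x)\,m \end{pmatrix},
\]
with determinant $-1-\epsilon^2-\epsilon H''(u_x) m < 0$, so $L$ is a uniformly elliptic second-order system on $\Tt$.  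Under Assumption \ref{ass3.5} together with $(u,m)\in C^{2,\frac12}\times C^{2,\frac12}$, all coefficients lie in $C^{0,\frac12}(\Tt)$, so classical Schauder theory applies.  One convenient route is to write $L = L_0 + K$, where $L_0$ is a constant-coefficient elliptic system that is an isomorphism between the relevant H\"older spaces (built from $(I-\partial_{xx})$ and the invertible matrix $\begin{pmatrix}-\epsilon & 1\\ 1 & \epsilon\end{pmatrix}$), and where $K$ is lower-order up to a single zeroth-order perturbation in the leading symbol that can be handled by a homotopy $L_t = tL + (1-t)L_0$.  Standard Schauder estimates yield Fredholmness and preservation of index $0$ along the homotopy, so $L$ has index $0$.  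Trivial kernel then gives surjectivity.

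\medskip

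\noindent\textbf{Main obstacle.}  I expect the bulk of the work to be the bookkeeping in the injectivity calculation: matching the right integration-by-parts combination so that the sign-indefinite terms $\int fH'(u_x)v_x$ cancel is the only non-routine identity, and it depends on using the $+\epsilon(v-v_{xx})$ and $-\epsilon(f-f_{xx})$ terms with opposite signs in the multiplication by $f$ and $v$.  The surjectivity step is conceptually routine once ellipticity and the index-$0$ property are granted, but writing it out carefully requires either invoking a black-box theorem on elliptic systems on compact manifolds or producing the Fredholm decomposition $L=L_0+K$ explicitly and checking that the second-order perturbation $-H''(u_x)m\,v_{xx}$ can be absorbed via a continuity-of-index argument.
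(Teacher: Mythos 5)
Your argument is correct, but your surjectivity half goes by a genuinely different route than the paper's. The injectivity computation is, up to an overall sign, exactly the identity at the heart of the paper's proof: the quadratic form you extract from $\int_0^1\bigl[f\,L_1(f,v)-v\,L_2(f,v)\bigr]\,\d x$ is the paper's coercivity identity
\[
B\left(\begin{pmatrix}v\\f\end{pmatrix},\begin{pmatrix}v\\ f \end{pmatrix}\right)=
\int_0^1 \Bigl[\alpha m^{\alpha-1}f^2+H''(u_x)\,m\,v_x^2+
\epsilon\bigl(v^2+v_x^2+f^2+f_x^2\bigr)\Bigr]\,dx
\]
for a bilinear form $B$ on $H^1(\Tt)\times H^1(\Tt)$. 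The paper then gets injectivity \emph{and} surjectivity in one stroke from the Lax--Milgram theorem ($B$ is coercive with constant $\epsilon$ and bounded), and upgrades the resulting weak solution to $C^{2,\frac12}\times C^{2,\frac12}$ by solving the two equations algebraically for $v_{xx}$ and $f_{xx}$ — the denominator $1+\epsilon^2+\epsilon H''(u_x)m\geq 1$ there is, up to sign, exactly your nonvanishing principal determinant — and bootstrapping. Your alternative, ellipticity plus a Fredholm index-zero argument, is sound: the leading coefficient matrix is indeed invertible, and in one dimension the Fredholm-of-index-zero claim can even be reduced to elementary linear algebra on the monodromy matrix of the associated first-order periodic ODE system. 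What it costs is importing Schauder/Fredholm theory for elliptic systems as a black box, where Lax--Milgram plus the explicit bootstrap is self-contained and gives the quantitative constant $\epsilon$. One step you must not wave at: the homotopy $L_t$ has to remain elliptic for every $t\in[0,1]$. With your $L_0=M(I-\partial_{xx})$, whose second-order coefficient matrix is $-M=\bigl(\begin{smallmatrix}\epsilon&-1\\-1&-\epsilon\end{smallmatrix}\bigr)$, the convex combination of leading matrices has determinant $-1-\epsilon^2-\epsilon t H''(u_x)m<0$ for all $t$, so the index argument closes; but had you taken $+M$ as the reference leading matrix, the combination degenerates at $t=\tfrac12$. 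This verification is the one place your surjectivity argument could silently fail, so it should be written out rather than asserted.
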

\begin{proof}
To prove the proposition, we begin by applying
the Lax-Milgram theorem in $H^1(\Tt)\times H^1(\Tt)$,
after which we bootstrap additional regularity. Here, we endow \(H^1(\Tt)\times H^1(\Tt)\) with
the inner product
\begin{equation*}
\langle (\theta_1,\theta_2),(\bar \theta_1, \bar
\theta_2)\rangle_{H^1(\Tt)\times H^1(\Tt)} = \int_0^1
\left( \theta_1 \bar\theta_1 +  \theta_2 \bar\theta_2
+  \theta_{1x} \bar\theta_{1x} + \theta_{2x} \bar\theta_{2x}\right)
dx
\end{equation*}
for \( (\theta_1,\theta_2)\), \((\bar \theta_1, \bar
\theta_2)\in H^1(\Tt)\times
H^1(\Tt) \).

Consider the bilinear form $B:(H^1(\Tt)\times H^1(\Tt))\times (H^1(\Tt)\times H^1(\Tt))\rightarrow \mathbb{R}$ defined for \((v,f)\), \((w_1,w_2)
\in H^1(\Tt)\times
H^1(\Tt) \) by
\begin{equation*}
\begin{aligned}
B\left(
\begin{pmatrix}v\\f\end{pmatrix},\begin{pmatrix}w_1\\w_2\end{pmatrix}\right)
=& \int_0^1 \left( f + \epsilon v  \right) w_1\, dx +\int_0^1 \left[f_x
+ H''(u_x) v_x m + H'(u_x)
f + \epsilon v_x\right] w_{1x}\, dx \\
&-\int_0^1 \left[ v + H'(u_x) v_x - \alpha m^{\alpha-1}
f -\epsilon f \right] w_2\, dx  + \int_0^1 \left(\epsilon f_x - v_x\right) w_{2x}\, dx.
\end{aligned}
\end{equation*}
Note that if $(v,f) \in
 C^{2, \frac 1 2 } (\Tt) \times
C^{2, \frac 1 2 } (\Tt)$, then
\begin{equation}
\label{Bvfreg}
  B\left(
\begin{pmatrix}v\\f\end{pmatrix},\begin{pmatrix}w_1\\w_2\end{pmatrix}\right) =\int_0^1{\left[-L_1(f,v)w_2+L_2(f,v)w_1\right]
dx,}
\end{equation}
 where $L_1$ and $L_2$ are
the first and second components of $L$, respectively. 

Next, we prove that \(B\) is coercive and bounded
in
\(H^1(\Tt)\times
H^1(\Tt)\).  Fix \( (v,f)\), \((w_1,
w_2)\in H^1(\Tt)\times
H^1(\Tt) \).
Using the integration by parts formula and the periodicity of $v$ and $f$, we obtain
\begin{equation*}
B\left(\begin{pmatrix}v\\f\end{pmatrix},\begin{pmatrix}v\\ f \end{pmatrix}\right)=
\int_0^1 \left[{\alpha m^{\alpha-1}f^2+H''(u_x)v_x^2m+
\epsilon(v^2+v_x^2+f^2+f_x^2)}\right]
dx.
\end{equation*}
Because $H''\geq0$ by Assumption
\ref{H_convex} and because  $m>0$, 
we have
that\[
 B\left(\begin{pmatrix}v\\f\end{pmatrix},\begin{pmatrix}v\\ f \end{pmatrix}\right)\geq \epsilon\left\|\begin{pmatrix}v\\f\end{pmatrix}\right\|^2_{H^1(\Tt)\times H^1(\Tt)},
\]
which proves the coercivity of \(B\).
  
Because $m$, $u$, and $H$ are $C^{2, \frac 1 2}$-functions on the compact set $[0,1]$,  we have that $m$, $u$, $m_x$, $u_x$, $u_{xx}$, $H$, $H'(u_x)$, and $H''(u_x)$ are bounded. Therefore, there exists
a positive constant, \(C\), that depends only on
these bounds and for which
\begin{equation*}
  \left|B\left(\begin{pmatrix}v\\f\end{pmatrix},\begin{pmatrix}w_1\\ w_2 \end{pmatrix}\right)\right|\leq C{\left\|\begin{pmatrix}v\\
f\end{pmatrix}\right\|}_{H^1(\Tt)\times
H^1(\Tt)}{\left\|\begin{pmatrix}w_1\\w_2\end{pmatrix}\right\|}_{H^1(\Tt)\times
H^1(\Tt)},
\end{equation*}
where we also used H\"older's inequality. 
This proves the boundedness of \(B\).
  
Finally, we fix $b=(b_1,b_2)\in C^{0, \frac 1 2}(\Tt)\times C^{0, \frac 1 2}(\Tt)$, 
  and we consider the  bounded and linear functional
\(G: H^1(\Tt)\times
H^1(\Tt) \to \Rr\) defined for \((w_1,
w_2)\in H^1(\Tt)\times
H^1(\Tt)\) by
  \[
  G\begin{pmatrix}w_1\\w_2\end{pmatrix}=\int_0^1
  \left( -b_1w_2+b_2w_1 \right) dx.\]
By the Lax-Milgram theorem, there exists a unique $(v,f)\in H^1(\Tt)\times H^1(\Tt)$ such that for
all \((w_1,
w_2)\in H^1(\Tt)\times
H^1(\Tt)\), we have   
\begin{equation*}
B\left(\begin{pmatrix}v\\f\end{pmatrix},
    \begin{pmatrix}w_1\\w_2\end{pmatrix}\right)=
    G\begin{pmatrix}w_1\\w_2\end{pmatrix}.
\end{equation*}
This is equivalent to saying that  for
all \((w_1,
w_2)\in H^1(\Tt)\times
H^1(\Tt)\),
\begin{equation*}
 B\left(\begin{pmatrix}v\\f\end{pmatrix}, 
  \begin{pmatrix}-w_2\\w_1\end{pmatrix}\right)=
  G\begin{pmatrix}-w_2\\w_1\end{pmatrix} =\int_0^1
  \left( -b_1w_1-b_2w_2 \right) dx .
\end{equation*}
  From this and \eqref{Bvfreg}, we conclude that $L(f,v)=b$ has a  unique weak solution $(f,v)\in H^1(\Tt)\times H^1(\Tt)$. Because $b\in  C^{0, \frac 1 2}(\Tt)\times
C^{0, \frac 1 2}(\Tt)$ is arbitrary, $L$ is injective. 
To prove surjectivity, it suffices to check that   
  the weak  solution of 
$L(f,v)=b$  is in $C^{2,\frac 1 2}(\Tt)\times C^{2,\frac 1 2}(\Tt)$.
This higher regularity  follows from a bootstrap
argument.
    
    Fix  $b=(b_1,b_2)\in C^{0, \frac 1
2}(\Tt)\times C^{0, \frac 1 2}(\Tt)$ and let  $(f,v)\in
H^1(\Tt)\times H^1(\Tt)$ be the weak solution
of $L(f,v)=b$    given by the Lax-Milgram theorem. Then, we have the following identity in the weak sense:
\begin{equation}
\label{eqnvxx}
   v_{xx}=\frac{g}{1+\epsilon^2+\epsilon H''(u_x)m},
\end{equation}
  where $g=v(1+\epsilon^2)+H'(u_x)v_x-\alpha m^{\alpha-1}f-\epsilon v_x(H'(u_x)m)_x-\epsilon(H'(u_x)f)_x-\epsilon b_2-b_1\in L^2(\Tt)$. We recall that \(1+\epsilon^2+\epsilon H''(u_x)m >1\).  Hence, $v_{xx}\in L^2(\Tt)$, and so $v\in H^2(\Tt)$. Moreover, because 
\begin{equation}
\label{eqnfxx}
f_{xx}=f-(H''(u_x)v_xm)_x-(H'(u_x)f)_x+\epsilon(v-v_{xx})-b_2
\end{equation}
in the weak sense,  similar arguments yield  $f_{xx}\in L^2(\Tt)$ and $f\in H^2(\Tt)$.
  
So far, \((f,v) \in C^{1,
\frac 1 2}(\Tt)\times
C^{1, \frac 1 2}(\Tt) \). This implies that \(g\in
C^{0, \frac1 2}(\Tt)\). Then, using the fact that
\(1+\epsilon^2+\epsilon
H''(u_x)m \) also belongs to \(C^{0, \frac1 2}(\Tt)\) and is bounded from below by 1, from
\eqref{eqnvxx} it follows that \(v_{xx} \in C^{0, \frac1
2}(\Tt) \). Consequently, in view of \eqref{eqnfxx},
  \(f_{xx} \in C^{0,
\frac1
2}(\Tt) \). Hence, \((f,v) \in C^{2,
\frac 1 2}(\Tt)\times
C^{2, \frac 1 2}(\Tt) \).
Therefore, the unique solution given by the Lax-Milgram theorem is a strong solution with $C^{2,\frac 1 2}$ regularity. Thus, $L$ is surjective. 
Because $L$  is injective and surjective, it is an isomorphism. 
\end{proof}

\section{Proof of the Main Theorem}
\label{pthm}

In this last section, we prove Theorem \ref{T1}.
We assume that    \(\epsilon>0\) satisfies
\(\epsilon < \min\{1,\epsilon_0,\bar
\epsilon_0\} \), where \(\epsilon_0\) and \(\bar
\epsilon_0\)  are given
by Proposition~\ref{prop_1} and  Lemma~\ref{lowboum_lemma},
respectively. 

Let $F$ be the functional  defined in \eqref{F}. 
For each \(\lambda \in [0,1]\),  consider the problem of finding \((u,m) \in C^{2, \frac 1 2}(\Tt)\times
C^{2,\frac 1 2}(\Tt;]0,\infty[) \) satisfying
\eqref{F=0}. 
From 
Propositions~\ref{prop_1} and \ref{hrofsol},
such a pair \((u,m)\) exists
for $\lambda = 0$. 
Next, using the continuation
method, we prove that this is true not only for
\(\lambda=0\) but also
for all  $\lambda \in [0,1]$. 
 
More precisely, let 
$\Lambda$ be the set of values, $\lambda\in [0,1]$, for which  equation \eqref{F=0} has
a solution $(u,m)\in C^{2, \frac 1 2}(\Tt)\times C^{2,\frac 1 2}(\Tt)$
with $m\geq\bar m$ in \(\Tt\), where $\bar m>0$ is given by 
Proposition \ref{lboundsprop}. Note that \(\bar
m\) does not depend on \(\lambda\) (see Remark~\ref{indepV2}).
As we just argued,  
 $\Lambda$ is a non-empty set. In the subsequent two
propositions, we show that   
 $\Lambda$ is a closed and open subset of $[0,1]$.
  Consequently, $\Lambda=[0,1]$.

\begin{proposition}
\label{prop121}
Suppose that Assumptions~\ref{ass1}--\ref{ass6} hold. 
Then, 
$\Lambda$ 
is a closed subset of $[0,1]$.
\end{proposition}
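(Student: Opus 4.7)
The plan is to use a standard compactness-and-limit argument based on the uniform a priori estimates derived in Sections \ref{enes}--\ref{lowboum}. Fix a sequence \(\{\lambda_n\} \subset \Lambda\) with \(\lambda_n \to \lambda^* \in [0,1]\). For each \(n\), let \((u_n,m_n) \in C^{2,\frac 1 2}(\Tt) \times C^{2,\frac 1 2}(\Tt)\) be a corresponding solution of \(F(u_n, m_n, \lambda_n)=0\) with \(m_n \geq \bar m\) on \(\Tt\). The goal is to extract a limit pair \((u^*,m^*)\) that satisfies \(F(u^*,m^*,\lambda^*)=0\) with \(m^* \geq \bar m\) and has the required \(C^{2,\frac 1 2}\) regularity.

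The first step is to invoke the a priori estimates of Propositions~\ref{enint}, \ref{soe}, \ref{hbounds}, and \ref{hrofsol} together with Remark~\ref{indepV}, which explicitly guarantees that the constants in these estimates do not depend on \(\lambda\) as long as \(\lambda \in [0,1]\) (since \(\|\lambda V\|_{C^2} \leq \|V\|_{C^2}\)). Consequently, the family \(\{(u_n,m_n)\}\) is uniformly bounded in \(C^{2,\frac 1 2}(\Tt) \times C^{2,\frac 1 2}(\Tt)\). By the Arzel\`a--Ascoli theorem, the embedding \(C^{2,\frac 1 2}(\Tt) \hookrightarrow C^2(\Tt)\) is compact, so, passing to a subsequence (not relabeled), we get \((u_n,m_n) \to (u^*,m^*)\) in \(C^2(\Tt) \times C^2(\Tt)\) for some limit pair.

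Next, I would pass to the limit in \(F(u_n,m_n,\lambda_n)=0\). Since convergence is in \(C^2\) and \(H\), \(H'\), \(V\), and \(t\mapsto t^\alpha\) are continuous, each term in \eqref{F} converges pointwise and uniformly on \(\Tt\). This yields \(F(u^*,m^*,\lambda^*)=0\). The lower bound \(m^* \geq \bar m\) on \(\Tt\) is immediate from \(m_n \geq \bar m\) and pointwise convergence; in particular, \(m^*>0\), so the nonlinearity \(m^\alpha\) and the coefficient \(1+\epsilon^2+\epsilon H''(u^*_x)m^*\) appearing in the analysis are well behaved.

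Finally, the regularity \((u^*,m^*)\in C^{2,\frac 1 2}(\Tt) \times C^{2,\frac 1 2}(\Tt)\) can be obtained either from the lower semicontinuity of the Hölder seminorm under the uniform \(C^{2,\frac 1 2}\) bound, or, more cleanly, by directly applying Proposition~\ref{hrofsol} to the limit pair: since \((u^*,m^*)\) solves Problem~\ref{P1} with potential \(\lambda^* V \in C^2\) and \(m^*>0\), the bootstrap argument in that proposition promotes the \(C^2\) regularity to \(C^{2,\frac 1 2}\). This gives \(\lambda^* \in \Lambda\) and hence \(\Lambda\) is closed. The only delicate point in this plan is ensuring strict positivity of \(m^*\) in the limit—this is precisely where the uniform lower bound \(\bar m\) of Proposition~\ref{lboundsprop} and its \(\lambda\)-independence noted in Remark~\ref{indepV2} are essential; without them, the limiting density could degenerate to zero and destroy both the structure of the equation and the \(C^{2,\frac 1 2}\) bootstrap.
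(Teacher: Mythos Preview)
Your proposal is correct and follows essentially the same compactness-and-limit strategy as the paper. The only cosmetic difference is the level at which Arzel\`a--Ascoli is applied: the paper extracts uniform convergence of $(u_n,m_n,u_{n,x},m_{n,x})$ from the $C^{0,\frac12}$ bounds of Proposition~\ref{hbounds} and then uses the explicit formulas \eqref{ABC123} and \eqref{7-2} to force uniform convergence of the second derivatives, whereas you package the same information as a uniform $C^{2,\frac12}$ bound up front and invoke the compact embedding $C^{2,\frac12}\hookrightarrow C^2$. Note that Proposition~\ref{hrofsol}, as stated, only asserts membership in $C^{2,\frac12}$ without a quantitative bound, so your ``uniformly bounded in $C^{2,\frac12}$'' claim tacitly relies on inspecting its proof (the ingredients in \eqref{ABC123}--\eqref{7-2} are uniformly controlled by Proposition~\ref{hbounds} and Remark~\ref{indepV}); this is exactly what the paper makes explicit, so the two arguments coincide once unpacked.
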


\begin{proof}
Let $(\lambda^n)_{n\in \mathbb{N}}
\subset \Lambda$ and \(\lambda
\in [0,1]\) be such that $\lim_{n\to \infty}\lambda^n = \lambda
$. We claim that $\lambda
\in \Lambda$.

By definition of $ \Lambda$, for each \(n\in\Nn\), there exists
 $(u^n,m^n) \in C^{2,\frac 1 2} (\Tt) \times C^{2,\frac 1 2}(\Tt)$
satisfying \eqref{F=0} and $m^n\geq \bar m$ 
in \(\Tt\).
Then, by Proposition~\ref{hbounds} (also see Remark~\ref{indepV}),
$( u^n )_{n \in \mathbb{N} }$,  $(
m^n )_{n \in \mathbb{N} }$, $( u_x^n )_{n
\in \mathbb{N} }$, and $ ( m_x^n )_{n
\in \mathbb{N} }$ are uniformly bounded in \(C^{0,\frac
1 2}(\Tt)\). Consequently, by the Arzel\'a-Ascoli theorem, we can find \((u,m,\tilde u, \tilde m)
\in C^{0,\frac
1 2}(\Tt) \times C^{0,\frac
1 2}(\Tt) \times C^{0,\frac
1 2}(\Tt) \times C^{0,\frac
1 2}(\Tt)\) such that, up to a subsequence that
we do not relabel,
\begin{equation}
\label{convunif}
\lim_{n\to \infty} \Vert (u^n,m^n, u^n_x,  m^n_x) - (u,m,\tilde u, \tilde m)\Vert_\infty =0.
\end{equation}
We now recall that if 
$ (w^n)_{n\in\Nn}$ is a sequence of differentiable functions
on $ [0, 1]$ such that $ (w^n)_{n\in\Nn}$ converges uniformly
to some $w$ on $[0,1]$ and such that  $ (w^n_x)_{n\in\Nn}$ converges uniformly
on $[0,1]$, then $ w_x = \lim_{n\to \infty} w_x^n
$ on \([0,1]\). Consequently, by \eqref{convunif},
we have that \(\tilde u = u_x\)
and \(\tilde m = m_x\).

Next, we show that $(u_{xx}^n)_{n\in \mathbb{N}}$ and $(m_{xx}^n)_{n\in
\mathbb{N}}$ are also uniformly convergent sequences
on \([0,1]\). In view of \eqref{ABC123}, we have that, for every $n \in
\mathbb{N}$,
\begin{equation} \label{uc_uxx}
u^n_{xx}=\frac{(1+\epsilon^2)u^n+H(u^n_x)-\epsilon
+\lambda^n V(x)-(m^n)^{\alpha}-\epsilon H'(u^n_x)m^n_x}{1+\epsilon^2+\epsilon
H''(u^n_x)m^n}.
\end{equation}
By Assumption~\ref{ass3.5} and by the uniform convergence of $\big(u^n,m^n,\lambda^n,
u_x^n, m_x^n\big)_{n\in \mathbb{N}}$ to $\big(u,
m, \lambda, u_x, m_x\big)$ on \([0,1]\), it follows
from \eqref{uc_uxx}  that $(u_{xx}^n )_{n\in
\mathbb{N}} $ converges uniformly on \([0,1]\).
Then,  the limit of $(u_{xx}^n )_{n\in
\mathbb{N}} $  is necessarily \(u_{xx}\).
Analogous arguments (see \eqref{7-2})  give that $(m_{xx}^n )_{n\in
\mathbb{N}} $ converges uniformly to \(m_{xx}\)
on \([0,1] \). Consequently, \((u,m) \in C^{2, \frac 1 2}(\Tt)\times
C^{2,\frac 1 2}(\Tt;]0,\infty[).\) 
Moreover, $\lim_{n \to \infty} F(u^n,m^n,\lambda^n)
 =  F(u,m,\lambda) $. 
 Finally, because for all \(n\in\Nn\), \(F(u^n,m^n,\lambda^n)=0\)
 and  $m^n\geq \bar m$ 
in \(\Tt\), we have that \(F(u,m,\lambda)=0\)
 and  $m\geq \bar m$
in \(\Tt\). Thus,   $\lambda
\in \Lambda$. This
  completes the proof.
\end{proof}

  \begin{proposition}
        \label{Pr2}
Suppose that Assumptions~\ref{ass1}--\ref{ass6} hold. Then, $\Lambda$ 
is an open subset of $[0,1]$.
  \end{proposition}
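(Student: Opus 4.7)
The plan is to prove openness of $\Lambda$ in $[0,1]$ by applying the infinite-dimensional implicit function theorem, using the isomorphism property established in Proposition~\ref{Lisisom} as the key ingredient, and then invoking the a priori lower bound on $m$ from Proposition~\ref{lboundsprop} to ensure the new solutions remain admissible.

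Fix $\lambda_0 \in \Lambda$. By definition, there exists $(u_0, m_0) \in C^{2,\frac 1 2}(\Tt) \times C^{2,\frac 1 2}(\Tt)$ with $m_0 \geq \bar m > 0$ on $\Tt$ and $F(u_0, m_0, \lambda_0) = 0$. Since $m_0 > 0$, the pair $(u_0, m_0)$ belongs to the open set $C^{2,\frac 1 2}(\Tt) \times C^{2,\frac 1 2}(\Tt;\,]0,\infty[)$ on which $F$ is defined. By Assumption~\ref{ass3.5} together with the composition properties of H\"older functions, $F$ is a $C^1$ map from this open set $\times[0,1]$ into $C^{0,\frac 1 2}(\Tt) \times C^{0,\frac 1 2}(\Tt)$. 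The partial Fr\'echet derivative of $F$ with respect to $(u,m)$ at $(u_0, m_0, \lambda_0)$ is precisely the linearized operator $L$ defined in \eqref{lin_op}. By Proposition~\ref{Lisisom}, $L$ is an isomorphism from $C^{2,\frac 1 2}(\Tt)\times C^{2,\frac 1 2}(\Tt)$ onto $C^{0,\frac 1 2}(\Tt)\times C^{0,\frac 1 2}(\Tt)$.

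Next, I would invoke the implicit function theorem in Banach spaces. This yields $\delta > 0$ and a $C^1$ curve
\begin{equation*}
[0,1] \cap (\lambda_0 - \delta, \lambda_0 + \delta) \ni \lambda \longmapsto (u(\lambda), m(\lambda)) \in C^{2,\frac 1 2}(\Tt) \times C^{2,\frac 1 2}(\Tt)
\end{equation*}
with $(u(\lambda_0), m(\lambda_0)) = (u_0, m_0)$ and $F(u(\lambda), m(\lambda), \lambda) = 0$ for all such $\lambda$. By continuity of $\lambda \mapsto m(\lambda)$ with values in $C^{2,\frac 1 2}(\Tt) \hookrightarrow C(\Tt)$, and since $m_0 \geq \bar m > 0$, we may shrink $\delta$ to ensure $m(\lambda) > 0$ on $\Tt$ for all $\lambda$ in this neighborhood. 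Thus each $(u(\lambda), m(\lambda))$ is a genuine solution of Problem~\ref{P1} with potential $\lambda V$.

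To conclude $\lambda \in \Lambda$, one must verify the lower bound $m(\lambda) \geq \bar m$. This is where the a priori estimate does the work: since $(u(\lambda), m(\lambda))$ solves the perturbed problem with strictly positive $m(\lambda)$, Proposition~\ref{lboundsprop} applies, and by Remark~\ref{indepV2} the constant $\bar m$ is independent of $\lambda \in [0,1]$. Hence $m(\lambda) \geq \bar m$ on $\Tt$, so $\lambda \in \Lambda$ and $\Lambda$ is relatively open in $[0,1]$. The only mildly delicate point in the plan is verifying that all the hypotheses of the infinite-dimensional implicit function theorem are genuinely met, in particular the $C^1$ dependence of $F$ on its arguments, but this follows routinely from Assumption~\ref{ass3.5} and the algebra properties of H\"older spaces, so no real obstacle arises once Proposition~\ref{Lisisom} is in hand.
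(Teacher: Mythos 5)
Your proposal is correct and follows essentially the same route as the paper: apply the implicit function theorem in Banach spaces using the isomorphism from Proposition~\ref{Lisisom}, use continuity of $\lambda \mapsto m(\lambda)$ to keep $m(\lambda)>0$ for $\lambda$ near $\lambda_0$, and then invoke the $\lambda$-uniform lower bound from Proposition~\ref{lboundsprop} (via Remark~\ref{indepV2}) to conclude $m(\lambda)\geq \bar m$ and hence $\lambda\in\Lambda$. Your write-up is in fact slightly more careful than the paper's in spelling out the two-step logic (first positivity by continuity, then the a priori bound to recover the uniform constant $\bar m$).
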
  

  \begin{proof}
  Let $\lambda_0 \in \Lambda$. Then, there exists $(u_0,m_0)\in C^{2,\frac 1 2 }(\Tt)\times C^{2,\frac 1 2 }(\Tt)$ satisfying  $F(u_0,m_0,\lambda_0) = 0$ 
and $m_0\geq \bar m$ in \(\Tt\).
By Proposition~\ref{Lisisom} and by the implicit function theorem in Banach spaces (see,  for example, \cite{D1}), we can find $\delta > 0$ such that, for every $\lambda^* \in\, ]\lambda-\lambda_0, \lambda+
\lambda_0[$, there exists $(u^*,m^*)\in C^{2,\frac 1 2 }(\Tt)\times C^{2,\frac 1 2 }(\Tt)$
satisfying $F(u^*,m^*,\lambda^*) = 0$ and  $ m^* \geq \bar m$ in $\Tt$.
Moreover, the implicit function theorem also guarantees that the map 
$\lambda^*\mapsto m^*$ is continuous. Hence, if $\delta$ is small enough, 
we have $m^*>0$ in $\Tt$. Then, Proposition~\ref{lowboum_lemma} gives $m^*>\bar m$ in $\Tt$. 
Therefore, $\lambda^* \in \Lambda$ and, consequently, $\Lambda$ is open.
  \end{proof}

Finally, we sum up the proof of our main result.
\begin{proof}[Proof of Theorem~\ref{T1}]
Let \(\epsilon>0\) be such that \(\epsilon < \min\{1,\epsilon_0,\bar
\epsilon_0\} \), where \(\epsilon_0\) is given by Proposition~\ref{prop_1} and where \(\bar
\epsilon_0\) is given
by Lemma~\ref{lowboum_lemma}.

Propositions~\ref{prop121} and \ref{Pr2} give that $\Lambda$ is a relatively open and closed set in $[0,1]$.
It is a  non-empty set due to Propositions~\ref{prop_1},
\ref{hrofsol}, and  \ref{lboundsprop}.
Hence, $\Lambda=[0,1]$. Finally, we observe that
Theorem~\ref{T1} corresponds to the  $\lambda=1$
case.
\end{proof}  
  
\begin{section}*{Acknowledgements}
This summer camp would not have been possible without
major help and support from KAUST.  David Yeh
and his team at the Visiting Student Research
Program
did a fantastic job in organizing all the logistics.
In addition, the CEMSE Division provided valuable
additional support. Finally, we would like to
thank the faculty, research scientists, post-docs
and Ph.D. students who, in the first three weeks
of the semester, made time to give courses, lectures,
and work with the students.  
\end{section}

  \bibliographystyle{alpha}
  
  \bibliography{mfg}

\end{document}